\theoremstyle{plain}
\newtheorem{theorem}{Theorem}
\newtheorem{lemma}[theorem]{Lemma}
\newtheorem{proposition}[theorem]{Proposition}
\theoremstyle{remark}
\newtheorem{claim}{Claim}
\theoremstyle{definition}
\begin{document}
\date{} 

\title [Gauss map of complete minimal surfaces on annular ends ]{Ramification of the Gauss map of complete minimal surfaces in $\mathbb R^m$ on annular ends}

\author[G. Dethloff, P. H. Ha and P. D. Thoan]{Gerd Dethloff, Pham Hoang Ha and Pham Duc Thoan}

\keywords{Minimal surface, Gauss map, Ramification, Value distribution theory.}
\subjclass[2010]{Primary 53A10; Secondary 53C42, 30D35, 32H30}

\begin{abstract} 
In this article, we study the ramification of the Gauss map of complete minimal surfaces in $\mathbb R^m$ on annular ends. This work is a continuation of previous work of Dethloff-Ha (\cite{DH}). We thus give an improvement of the results on annular ends of complete minimal surfaces of Jin-Ru (\cite{JR}). 
\end{abstract}
\maketitle
\tableofcontents

\section{Introduction}
In 1988, H. Fujimoto (\cite{Fu}) proved Nirenberg's conjecture that if $M$ is a complete non-flat minimal surface in $\mathbb R^3,$ then its Gauss map can omit at most 4 points, and the bound is sharp. After that, he also extended that result for  minimal surfaces in $\mathbb R^m.$ He proved that the Gauss map of a non-flat complete minimal surface can  omit at most $m(m+1)/2$ hyperplanes in $\mathbb P^{m-1}(\mathbb C)$ located in general position (\cite{Fu2}). He also gave an example to show that the number $m(m+1)/2$ is the best possible when $m$ is odd
(\cite{Fu3}).\\

In 1993, M. Ru (\cite{Ru2}) refined these results  by studying the Gauss maps of minimal surfaces in $\mathbb R^m$ with ramification. Using the notations 
which will be introduced in $\S 3,$ the result of Ru can be stated as follows.\\

\noindent {\bf Theorem A.}\ {\it Let $M$ be a non-flat complete minimal surface in $\mathbb R^m$.
Assume that the (generalized) Gauss map $g$ of $M$ is $k-$non-degenerate (that is $g(M)$ is contained in a $k-$dimensional linear subspace in $ \mathbb P^{m-1}(\mathbb C)$, but none of lower dimension), $1\leq k \leq m-1$.
Let  $\{H_j\}_{j=1}^q$ be hyperplanes in general position in $ \mathbb P^{m-1}(\mathbb C)$ such that $g$ is ramified over
$H_j$ with multiplicity at least $m_j$  for each j, then 
$$
\sum_{j=1}^q(1 - \frac{k}{m_j})\leq (k+1)(m-\dfrac{k}{2}-1)+m\,. \   
$$
In particular  if there are q $(q > m(m+1)/2)$ hyperplanes  $\{H_j\}_{j=1}^q$ in general position in $ \mathbb P^{m-1}(\mathbb C)$ such that $g$ is ramified over
$H_j$ with multiplicity at least $m_j$  for each j, then 
$$\sum_{j=1}^q(1 - \frac{m-1}{m_j})\leq \dfrac{m(m+1)}{2}\,. \   $$}

On the other hand, in 1991, S. J. Kao (\cite{Kao}) used the ideas of Fujimoto~(\cite{Fu}) to show that the Gauss map of an end of a non-flat complete minimal surface in $\mathbb R^3$ that is conformally an annulus $\{ z  : 0 < 1/r < |z| < r \}$ must also assume every value, with at most 4 exceptions. In 2007, L. Jin and M. Ru (\cite{JR}) extended Kao's result to minimal surfaces in $\mathbb R^m.$ They  
proved : \\

\noindent {\bf Theorem B. }\ {\it
Let $M$ be a non-flat complete minimal surface in $\mathbb R^m$ and let $A$ be an
annular end of $M$ which is conformal to $\{z  :  0 < 1/r < |z| < r\}$, where z
is a conformal coordinate. Then the restriction to $A$ of the (generalized) Gauss map of $M$ can not omit more than $ m(m+1)/2$ hyperplanes in general position in $ \mathbb P^{m-1}(\mathbb C).$ }

Recently, the two first named authors (\cite{DH}) gave an  improvement of the Theorem of Kao. Moreover they also gave an analogue result for the case $m=4.$ 
In this paper we will consider the corresponding problem for the (generalized) Gauss map for non-flat complete minimal surfaces in $\mathbb R^m$ for all $m \geq 3$. 
In this general situation we obtain the following :\\

\noindent {\bf Main Theorem.} \
{\it Let $M$ be a non-flat complete minimal surface in $\mathbb R^m$ and let $A$ be an
annular end of $M$ which is conformal to $\{z  :  0 < 1/r < |z| < r\}$, where z
is a conformal coordinate. Assume that the generalized Gauss map $g$ of $M$ is $k-$non-degenerate on $A$ (that is $g(A)$ is contained in a $k-$dimensional linear subspace in $ \mathbb P^{m-1}(\mathbb C)$, but none of lower dimension), $1\leq k \leq m-1$. If there are $q$ 
hyperplanes  $\{H_j\}_{j=1}^q$ in $N$-subgeneral position in $ \mathbb P^{m-1}(\mathbb C)$ $(N \geq m-1)$ such that $g$ is ramified over
$H_j$ with multiplicity at least $m_j$ on $A$ for each j, then 
\begin{equation}\label{1}
\sum_{j=1}^q(1 - \frac{k}{m_j})\leq (k+1)(N-\dfrac{k}{2})+(N+1). \   
\end{equation}
Moreover, (\ref{1})  still holds if we replace, for all $j=1,...,q$,  $m_j$ by the limit inferior
of the orders of the zeros of the function $ (g, H_j):= \overline{c}_{j0}g_1 +\cdots + \overline{c}_{jm-1}g_{m-1}$ on $A$ (where $g =(g_0: \cdots : g_{m-1})$ is a reduced representation and, for all  $ 1 \leq j \leq q $, the hyperplane $H_j$ in $\mathbb P^{m-1}(\mathbb C)$ is given by $H_j: \overline{c}_{j0}\omega_0+\cdots+ \overline{c}_{jm-1}\omega_{m-1}=0,$ where we assume that $\sum_{i=0}^{m-1}|c_{ji}|^2 = 1$) or by $\infty$ if $g$ intersects $H_j$ only a finite number of times on $A$.}\\

\noindent {\bf Corollary 1.}\ 
{\it Let $M$ be a non-flat complete minimal surface in $\mathbb R^m$ and let $A$ be an
annular end of $M$ which is conformal to $\{z  :  0 < 1/r < |z| < r\}$, where z
is a conformal coordinate. If there are $q$
 hyperplanes  $\{H_j\}_{j=1}^q$ in $N$-subgeneral position in $ \mathbb P^{m-1}(\mathbb C)$ $(N \geq m-1)$ such that the generalized Gauss map $g$ of $M$ is ramified over
$H_j$ with multiplicity at least $m_j$ on $A$ for each j, then 
\begin{equation}\label{2}
\sum_{j=1}^q(1 - \frac{m-1}{m_j})\leq m(N-\dfrac{m-1}{2})+(N+1). \  
\end{equation}
In particular if the hyperplanes  $\{H_j\}_{j=1}^q$ are in general position in $ \mathbb P^{m-1}(\mathbb C)$ we have 
\begin{equation}\label{2'}
\sum_{j=1}^q(1 - \frac{m-1}{m_j})\leq \dfrac{m(m+1)}{2}. \   
\end{equation}
Moreover, (\ref{2}) and (\ref{2'})  still hold if we replace, for all $j=1,...,q$,  $m_j$ by the limit inferior
of the orders of the zeros of the function $ (g, H_j):= \overline{c}_{j0}g_1 +\cdots + \overline{c}_{jm-1}g_{m-1}$ on $A$ (where $g =(g_0: \cdots : g_{m-1})$ is a reduced representation and, for all  $ 1 \leq j \leq q $, the hyperplane $H_j$ in $\mathbb P^{m-1}(\mathbb C)$ is given by $H_j: \overline{c}_{j0}\omega_0+\cdots+ \overline{c}_{jm-1}\omega_{m-1}=0,$ where we assume that $\sum_{i=0}^{m-1}|c_{ji}|^2 = 1$) or by $\infty$ if $g$ intersects $H_j$ only a finite number of times on $A$.
}\\

Our Corollary 1 gives the following improvement of Theorem B of Jin-Ru :\\

\noindent {\bf Corollary 2.}\ {\it If the (generalized) Gauss map $g$ on an annular end 
of a non-flat complete minimal surface in $\mathbb R^m$
assumes $m(m+1)/2$ hyperplanes in general position only finitely often, it takes any other hyperplane in general position (with respect to the previous hyperplanes) infinitely often  with ramification at most $m-1$.}\\

{\bf   Remark.}\  It is well known that the image of the (generalized) Gauss map
$g: M \rightarrow  \mathbb P^{m-1}$ is contained in the hyperquadric $Q_{m-2} \subset \mathbb P^{m-1}$,
and that $Q_1(\mathbb C)$ is biholomorphic to $\mathbb P^1(\mathbb C)$ and that $Q_2(\mathbb C)$ is biholomorphic to $\mathbb P^1(\mathbb C)\times\mathbb P^1(\mathbb C) $. So the results in Dethloff-Ha (\cite{DH}) which only treat the cases $m=3$ and $m=4$ are better than a result which holds for any $m \geq 3$ can be 
if restricted to the special cases $m=3,4$. The easiest way to see the difference is
to observe that $6$ lines in $\mathbb P^2$ in general position may have only $4$
points of intersection with the quadric $Q_1 \subset \mathbb P^2$.\\

The main idea to prove the Main Theorem is to construct and to compare explicit singular flat and negatively curved complete metrics with ramification on these annular ends. This generalizes previous work of Dethloff-Ha (\cite{DH})
(which itself was a refinement of ideas of Ru (\cite{Ru2})) to targets of higher dimensions, which needs among others to combine these explicit singular
metrics with the use of technics from hyperplanes in subgeneral position and 
with the use of intermediate contact functions.  After that we use arguments similar to those used by Kao (\cite{Kao}) and Fujimoto (\cite{Fu} - \cite{Fu3}) to finish the proofs.

\section{Preliminaries}
Let $f$ be a linearly non-degenerate holomorphic map of $\Delta_R :=\{z \in \mathbb C :  |z| < R\}$ into $\mathbb P^k(\mathbb C),$ where $0 < R \leq +\infty.$
 Take a reduced representation $f = (f_0: \cdots : f_k)$. Then $F := (f_0, \cdots, f_k): \Delta_R \rightarrow \mathbb C^{k+1} \setminus \{0\}$ is a holomorphic map with $\mathbb P(F) = f.$ 
 Consider the holomorphic map
\begin{equation*}
F_p=(F_p)_{z}:=F^{(0)}\wedge F^{(1)}\wedge\cdots\wedge F^{(p)}:\Delta_R\longrightarrow \wedge^{p+1}\mathbb{C}^{k+1}
\end{equation*}
for $0\le p \le k,$  where  $F^{(0)} := F= (f_0,\cdots,f_k)$ and $F^{(l)}=(F^{(l)})_{z}:=(f_0^{(l)},\cdots,f_k^{(l)})$ for each $l=0,1,\cdots,k$, and where the $l$-th derivatives 
$f_i^{(l)}=(f_i^{(l)})_{z}$, $i=0,...,k$, are taken with respect to $z$.
(Here and for the rest of this paper the index $|_{z}$ means that the corresponding term
is defined by using differentiation with respect to the variable $z$, and in order to keep notations simple, we usually drop this index if no confusion is possible.)
The norm of $F_p$ is given by
\begin{equation*}
\left| F_p\right|:= \bigg(\sum_{0\leq i_0<\cdots<i_p\leq k}\left|W(f_{i_0},\cdots,f_{i_p})\right|^2\bigg)^\frac{1}{2},
\end{equation*}
where $W(f_{i_0},\cdots,f_{i_p}) = W_z(f_{i_0},\cdots,f_{i_p})$ denotes the Wronskian of $f_{i_0},\cdots,f_{i_p}$ with respect to $z$.
\begin{proposition}(\cite[Proposition 2.1.6]{Fu3})\label{W}.\\
For two holomorphic local coordinates $z$ and $\xi$ and a holomorphic function 
$h : \Delta_R \rightarrow \mathbb{C}$, the following holds :\\
a) $  W_{\xi}(f_0,\cdots, f_p)= W_z(f_0,\cdots, f_p) \cdot (\frac{dz}{d\xi})^{p(p+1)/2}$.\\
b) $W_{z}(hf_0,\cdots, hf_p)= W_z(f_0,\cdots, f_p) \cdot (h)^{p+1}. $ 
\end{proposition}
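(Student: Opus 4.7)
The plan is to prove both identities by writing each Wronskian as the determinant of an explicit $(p+1)\times(p+1)$ matrix and then factoring that matrix as a product in which one factor is lower triangular with an easily computed diagonal. Both the Leibniz product rule (for part (b)) and the chain rule (for part (a)) naturally produce such a triangular factor, so the transformation laws reduce to computing the determinant of a triangular matrix.

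For part (b), the Leibniz rule gives $(hf_j)^{(i)} = \sum_{l=0}^{i}\binom{i}{l} h^{(i-l)} f_j^{(l)}$, so the matrix $\bigl((hf_j)^{(i)}\bigr)_{0\le i,j\le p}$ factors as $A\cdot B$ where $A$ is the lower triangular matrix with entries $A_{il} = \binom{i}{l} h^{(i-l)}$ (and $0$ for $l>i$), and $B_{lj}=f_j^{(l)}$ is the Wronskian matrix of $f_0,\dots,f_p$. The diagonal of $A$ consists of $p+1$ copies of $h$, so $\det A = h^{p+1}$, and taking determinants yields (b) immediately. For part (a), I would first establish by induction on $l$ an expansion of the form $f^{(l)}_\xi = \bigl(\tfrac{dz}{d\xi}\bigr)^l f^{(l)}_z + \sum_{s<l} c_{ls}(\xi)\, f^{(s)}_z$, where the coefficients $c_{ls}$ are holomorphic functions of $\xi$ depending only on the coordinate change and not on $f$. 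Substituting this expansion into the Wronskian matrix exhibits $\bigl(f_j^{(i)}\bigr)_\xi$ as $L\cdot \bigl(f_j^{(i)}\bigr)_z$, where $L$ is lower triangular with diagonal entries $1,\tfrac{dz}{d\xi},\bigl(\tfrac{dz}{d\xi}\bigr)^2,\dots,\bigl(\tfrac{dz}{d\xi}\bigr)^p$. Taking determinants and using $0+1+\cdots+p=p(p+1)/2$ delivers the claimed transformation factor.

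The main, and essentially only, obstacle is the bookkeeping in part (a): the triangular structure must be verified by a careful induction on the order of differentiation. However, because only the diagonal of $L$ enters its determinant, the explicit shape of the off-diagonal coefficients $c_{ls}$ is irrelevant to the final identity, which keeps the argument clean. Once the two factorizations are in place, both statements are routine consequences of multiplicativity of determinants.
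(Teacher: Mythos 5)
Your proof is correct and complete. Note, however, that the paper itself gives no proof of this proposition: it is quoted verbatim from Fujimoto's book (\cite[Proposition 2.1.6]{Fu3}) and used as a black box, so there is no in-paper argument to compare yours against. Your determinant-factorization argument is the standard one and is essentially what one finds in the cited source: part (b) is immediate from the Leibniz rule giving a lower triangular factor with diagonal $h,\dots,h$, and part (a) follows from the inductive expansion $f^{(l)}_{\xi}=\bigl(\tfrac{dz}{d\xi}\bigr)^{l}f^{(l)}_{z}+\sum_{s<l}c_{ls}f^{(s)}_{z}$, whose key feature — that the coefficients $c_{ls}$ depend only on the coordinate change and not on $f$, so that a single triangular matrix $L$ works simultaneously for all columns $f_0,\dots,f_p$ — you correctly identify and verify. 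The only thing worth making explicit in a written version is that inductive step (differentiating the expansion once more in $\xi$ and observing that the top coefficient picks up exactly one more factor of $\tfrac{dz}{d\xi}$), but as you say, the off-diagonal terms never enter the determinant, so the argument is sound as it stands.
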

\begin{proposition}(\cite[Proposition 2.1.7]{Fu3})\label{W1}.\\
For holomorphic functions $f_0, \cdots , f_p : \Delta_R \rightarrow \mathbb{C}$ the following conditions are equivalent:\\ 
(i)\ $ f_0, \cdots, f_p$ are linearly dependent over $\mathbb C.$\\
(ii)\ $W_z(f_0,\cdots,f_p) \equiv 0$ for some (or all) holomorphic local coordinate $z.$
\end{proposition}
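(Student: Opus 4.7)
The plan is: prove (i)~$\Rightarrow$~(ii) by differentiation, prove (ii)~$\Rightarrow$~(i) by induction on $p$ using the quotient reduction supplied by Proposition~\ref{W}(b), and deduce the ``some (or all) coordinates'' clause from Proposition~\ref{W}(a). That last point is immediate: under a holomorphic change of coordinate $z = z(\xi)$, one has $W_\xi(f_0,\ldots,f_p) = W_z(f_0,\ldots,f_p)\cdot (dz/d\xi)^{p(p+1)/2}$, and the factor is nowhere vanishing, so identical vanishing of the Wronskian is a coordinate-free condition. For (i)~$\Rightarrow$~(ii), suppose $\sum_{i=0}^{p} c_i f_i \equiv 0$ with not all $c_i = 0$; differentiating $l$ times gives $\sum_{i=0}^{p} c_i f_i^{(l)} \equiv 0$ for $l = 0, 1, \ldots, p$, so the columns of the Wronskian matrix are linearly dependent over $\mathbb{C}$ and $W_z(f_0,\ldots,f_p) \equiv 0$.

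The substance is (ii)~$\Rightarrow$~(i), which I would prove by induction on $p$. The base case $p = 0$ is trivial: $W_z(f_0) = f_0 \equiv 0$ gives the degenerate relation $1 \cdot f_0 \equiv 0$. For the inductive step, assume the result for $p-1$ holomorphic functions on any connected open subset of $\mathbb{C}$, and suppose $W_z(f_0, \ldots, f_p) \equiv 0$ on $\Delta_R$. If some $f_i \equiv 0$ we are done, so after relabeling assume $f_0 \not\equiv 0$. On the open set $U := \{z \in \Delta_R : f_0(z) \neq 0\}$ define $g_i := f_i/f_0$ for $i = 1,\ldots,p$. Proposition~\ref{W}(b) with $h = f_0$ yields $W_z(f_0, f_1, \ldots, f_p) = f_0^{p+1}\, W_z(1, g_1, \ldots, g_p)$, and cofactor expansion along the first column (whose only non-zero entry is $1$ in the top-left position) gives $W_z(1, g_1, \ldots, g_p) = W_z(g_1', \ldots, g_p')$. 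Hence on $U$ we have $W_z(g_1', \ldots, g_p') \equiv 0$. The inductive hypothesis applied on a non-empty connected component $U_0$ of $U$ produces constants $c_1,\ldots,c_p$, not all zero, with $\sum_{i=1}^{p} c_i g_i' \equiv 0$ on $U_0$; by connectedness of $U_0$, $\sum_{i=1}^{p} c_i g_i \equiv c$ for some constant $c$, i.e.\ $\sum_{i=1}^{p} c_i f_i \equiv c f_0$ on $U_0$. The identity theorem on the connected disk $\Delta_R$ then propagates the holomorphic identity $c f_0 - \sum_{i=1}^{p} c_i f_i \equiv 0$ from $U_0$ to all of $\Delta_R$, and at least one of the coefficients $c_i$ is non-zero, completing the induction.

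The main technical obstacle is handling the zero set of $f_0$: the quotient $g_i = f_i/f_0$ and the crucial identity $W(f_0,\ldots,f_p) = f_0^{p+1} W(g_1',\ldots,g_p')$ only live on $\{f_0 \neq 0\}$, so one has to first derive the linear relation on a single connected component $U_0$ of this set and then use the identity theorem to extend it to all of $\Delta_R$. A secondary subtlety is that the inductive hypothesis must be stated not just for $\Delta_R$ but for any connected open subset of $\mathbb{C}$ (since $U_0$ need not be a disk); the argument above uses only connectedness plus the identity theorem, so it self-bootstraps and proves the stronger version simultaneously with the proposition as stated.
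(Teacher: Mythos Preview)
Your proof is correct. Note, however, that the paper does not itself prove this proposition: it is quoted from Fujimoto's monograph \cite[Proposition~2.1.7]{Fu3} without proof, so there is no ``paper's own proof'' to compare against. What you have written is the standard argument (and is essentially the proof Fujimoto gives): the implication (i)~$\Rightarrow$~(ii) is immediate from differentiating a linear relation, and (ii)~$\Rightarrow$~(i) proceeds by induction after dividing through by a non-vanishing $f_0$ and using the identity $W_z(1,g_1,\ldots,g_p)=W_z(g_1',\ldots,g_p')$. Your handling of the zero set of $f_0$ via restriction to a connected component followed by the identity theorem, and your observation that the inductive hypothesis should be phrased for arbitrary connected open subsets, are both the right way to close the gaps that a careless write-up would leave.
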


We now take a hyperplane $H$ in $\mathbb P^k(\mathbb C)$ given by
\begin{equation*}
H:\overline{c}_0\omega_0+\cdots+\overline{c}_k\omega_k=0\,,
\end{equation*}
with $\sum_{i=0}^k|c_i|^2 = 1.$
We set 
\begin{equation*}
F_0(H) :=F(H):=\overline{c}_0f_0+\cdots+\overline{c}_kf_k
\end{equation*}
 and 
\begin{equation*}
\left| F_p(H)\right|=\left| (F_p)_{z}(H)\right|:= \bigg(\sum_{0\leq i_1< \cdots<i_p\leq k}\left|\sum_{l\not= i_1,...,i_p}\overline{c}_lW(f_{l},f_{i_1}, \cdots,f_{i_p})\right|^2\bigg)^\frac{1}{2},
\end{equation*}
for $1\leq p \leq k.$ We note that by using Proposition \ref{W}, $\left| (F_p)_{z}(H)\right|$ is multiplied by a factor $|\frac{dz}{d\xi}|^{p(p+1)/2}$ if we choose another holomorphic local coordinate $\xi$, and it is multiplied by $|h|^{p+1}$ if we choose another reduced representation $f=(hf_0:\cdots:hf_k)$ with a nowhere zero holomorphic function $h.$
Finally, for $0 \leq p \leq k$, set the $p$-th contact function of $f$ for $H$ to be $\phi_p(H):=\dfrac{|F_p(H)|^2}{|F_p|^2}=\dfrac{|(F_p)_{z}(H)|^2}{|(F_p)_{z}|^2}$.\\

We next consider $q$ hyperplanes $H_1,\cdots,H_q$ in $\mathbb{P}^{k}(\mathbb{C})$ given by
 $$H_j:\left\langle \omega ,A_j\right\rangle \equiv \overline{c}_{j0}\omega_0+\cdots+\overline{c}_{jk}\omega_k \quad(1\leq j\leq q)$$
where $A_j:=(c_{j0},\cdots,c_{jk})$ with $\sum_{i=0}^k|c_{ji}|^2 = 1.$

Assume now $N \geq k$ and $q \geq N+1$.
For $R\subseteq Q:=\left\{1,2,\cdots,q\right\},$ denote by $d(R)$ the dimension of the vector subspace of $\mathbb C^{k+1}$ generated by $\left\{A_j;j\in R \right\}$.\\
\indent The hyperplanes $H_1,\cdots,H_q$ are said to be in $N$-subgeneral position if $d(R)=k+1$ for all $R\subseteq Q$ with $\sharp (R)\geq N+1,$ where $\sharp (A)$ means the number of elements of a set $A.$ In the particular case $N=k$, these are said to be in general position.
\begin{theorem}(\cite[Theorem 2.4.11]{Fu3}) \label{T1}
\emph {\it For given hyperplanes $H_1,\cdots,H_q $ $( q > 2N - k + 1)$ in $\mathbb{P}^k(\mathbb{C})$ located in $N$-subgeneral position, there are some rational numbers $\omega(1),\cdots,\omega(q)$ and $\theta$ satisfying the following conditions:\\
 \indent (i) $0<\omega(j)\leq \theta\leq 1  \quad(1\leq j\leq q),$\\
 \indent (ii) $ \sum^{q}_{j=1}\omega(j)=k+1+\theta(q-2N+k-1),$\\
 \indent (iii) $\frac{k+1}{2N-k+1}\leq \theta \leq \frac{k+1}{N+1},$\\
 \indent (iv) If $R\subset Q$ and $0< \sharp(R)\leq n+1,$ then $\sum_{j \in R} \omega(j)\leq d(R).$}
 \end{theorem}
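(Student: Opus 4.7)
\medskip

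\noindent\textbf{Proof plan for Theorem \ref{T1}.} The plan is to construct the Nochka weights $\omega(1),\dots,\omega(q)$ and the constant $\theta$ via a recursive \emph{peel-off} algorithm on ``critical'' subsets of $Q$, following Nochka's original approach (as systematized by Chen, Ru, and Fujimoto). At each step the algorithm identifies a subset of hyperplanes that is maximally constrained relative to the $N$-subgeneral hypothesis, assigns a common weight to its members, and recurses on a quotient problem in lower dimension.

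Concretely, I would introduce a ratio $\mu(R)$ depending on $d(R)$, $\sharp(R)$, and the parameters $k,N,q$, scaled so that the desired weights are obtained as values of $\mu$. Among all non-empty $R\subsetneq Q$, pick a minimizer $R_1$ of $\mu$ (choosing $R_1$ minimal under inclusion in case of ties), set $\omega(j):=\mu(R_1)$ for every $j\in R_1$, and then reduce: form the quotient $\mathbb{C}^{k+1}/V_{R_1}$, where $V_{R_1}$ denotes the span of $\{A_j:j\in R_1\}$; project the remaining $A_j$ to a system in $\mathbb{C}^{k+1-d(R_1)}$; and verify, via a rank–nullity argument using only the original $N$-subgeneral hypothesis, that the projected system is in $(N-\sharp(R_1))$-subgeneral position. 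Iterating produces a partition $Q=R_1\sqcup\cdots\sqcup R_s$ and a full assignment of rational weights.

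After termination, set $\theta:=\max_j\omega(j)$; this maximum is realized on the last block $R_s$. Normalization (ii) fixes the overall scale of the $\mu$'s. The upper bound $\theta\leq(k+1)/(N+1)$ in (iii) follows by applying the $N$-subgeneral condition to $R_s$, which must contain enough elements to span all of $\mathbb{C}^{k+1}$. The lower bound is obtained by summing the pointwise inequality $\omega(j)\leq\theta$ over $j$ and comparing with (ii):
\[
k+1+\theta(q-2N+k-1)=\sum_{j=1}^q\omega(j)\leq q\theta,
\]
which rearranges to $\theta\geq(k+1)/(2N-k+1)$. Rationality of $\omega(j)$ and $\theta$ is inherited from the fact that each step of the algorithm involves only rational operations on $d(R)$ and $\sharp(R)$.

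The main obstacle I expect is verifying condition (iv) for \emph{every} subset $R\subseteq Q$ with $0<\sharp(R)\leq N+1$, not only the critical subsets $R_i$ picked by the algorithm. This requires combining the submodular inequality $\dim(V+V')+\dim(V\cap V')\leq\dim V+\dim V'$ for linear spans with the minimality of $\mu(R_i)$ at each recursive step: a hypothetical $R$ violating (iv) would produce, via intersections $R\cap R_i$, a subset whose ratio $\mu$ is strictly smaller than the one selected at the corresponding step of the recursion, contradicting greedy choice. The bookkeeping is most delicate when $R$ straddles several blocks $R_i$, and carrying out this combinatorial step cleanly is the crux of the proof.
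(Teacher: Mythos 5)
The paper does not actually prove this statement: it is quoted verbatim as Theorem 2.4.11 of Fujimoto's monograph (the existence of Nochka weights and a Nochka constant), so there is no in-paper argument to compare yours against. Judged on its own terms, what you have written is a plan rather than a proof, and the plan leaves open precisely the step that makes Nochka's theorem hard.

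Two concrete issues. First, your reduction step is unjustified and, as stated, doubtful: after passing to the quotient $\mathbb{C}^{k+1}/V_{R_1}$, some of the projected vectors $A_j$ may vanish or become proportional, and it is not clear why the surviving system should be in $(N-\sharp(R_1))$-subgeneral position in the quotient. The known constructions (Nochka, Chen, Ru, Fujimoto, and Vojta's later streamlining) do not partition $Q$ via quotients; they build a single nested chain $\emptyset=P_0\subsetneq P_1\subsetneq\cdots\subsetneq P_s$ of subsets extremal for ratios of the form $\bigl(d(P_i)-d(P_{i-1})\bigr)/\bigl(\sharp(P_i)-\sharp(P_{i-1})\bigr)$, set $\omega(j)=\theta$ off $P_s$ and $\omega(j)=\theta\bigl(d(P_i)-d(P_{i-1})\bigr)/\bigl(\sharp(P_i)-\sharp(P_{i-1})\bigr)$ on $P_i\setminus P_{i-1}$, and then determine $\theta$ from (ii). Second, you explicitly defer the verification of (iv) for every $R$ with $0<\sharp(R)\leq N+1$, calling it the crux; it is indeed the crux, and it is exactly where the argument is delicate (this is why a complete correct proof of Nochka's announcement took several years to appear). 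A submodularity-plus-extremality argument of the sort you gesture at can be made to work, but until it is written out the proposal does not establish the theorem. Incidentally, the upper bound $\theta\leq(k+1)/(N+1)$ in (iii) does not come for free from ``$R_s$ spans'': one needs $\sharp(R)-d(R)\leq N-k$ for all $R$ with $\sharp(R)\leq N+1$, which follows from $N$-subgeneral position by enlarging $R$ to a set of cardinality $N+1$ and counting dimension increments. Your derivation of the lower bound in (iii) from (i) and (ii), on the other hand, is correct.
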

\noindent  Constants $\omega(j)\ (1 \leq  j  \leq q)$ and $\theta$ with the properties of Theorem~\ref{T1} are called Nochka weights and a Nochka constant for $H_1, \cdots, H_q$ respectively.  Related to Nochka weights, we have the following.
\begin{proposition}(\cite[Proposition 2.4.15]{Fu3})
\label{P1}
Let $H_1,\cdots,H_q$ be hyperplanes in $\mathbb{P}^k(\mathbb{C})$ located in $N$-subgeneral position and let $\omega(1),\cdots,\omega(q)$ be Nochka weights for them, where $q>2N-k+1$. For each $R\subseteq Q:= \left\{1,2,\cdots,q\right\}$ with $0< \sharp (R)\leq N+1$ and real constants $E_1,\cdots,E_q$ with $E_j \geq 1$, there is some $R'\subseteq R$ such that $\sharp (R')=d(R)=d(R')$ and
\begin{equation*}
\prod_{j\in R}E^{\omega(j)}_{j}\leq \prod_{j\in R'}E_j.
\end{equation*}
\end{proposition}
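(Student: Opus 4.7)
\textbf{Proof proposal for Proposition \ref{P1}.}

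The plan is to prove the inequality by sorting the $E_j$'s in decreasing order, greedily choosing $R'$ to pick up dimensions in that order, and then applying Abel summation (summation by parts) together with the Nochka bound (iv) from Theorem \ref{T1}. Since $E_j\ge 1$, we may pass to logarithms: writing $e_j:=\log E_j\ge 0$, the inequality to prove is
\begin{equation*}
\sum_{j\in R}\omega(j)\,e_j \;\le\; \sum_{j\in R'} e_j.
\end{equation*}

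First I would relabel the indices in $R$ as $R=\{j_1,\ldots,j_s\}$ (where $s=\sharp(R)\le N+1$) so that $e_{j_1}\ge e_{j_2}\ge\cdots\ge e_{j_s}\ge 0$. Set $R_l:=\{j_1,\ldots,j_l\}$ and $d_l:=d(R_l)$, so $d_0=0$ and $d_s=d(R)=:d$. Now I would define $R'\subseteq R$ greedily: include $j_l$ in $R'$ precisely when $d_l>d_{l-1}$, i.e.\ whenever $A_{j_l}$ is linearly independent from $\{A_{j_1},\ldots,A_{j_{l-1}}\}$. By construction $\sharp(R')=d(R')=d(R)=d$, which is the required combinatorial condition on $R'$.

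Next I would apply Abel summation to $\sum_{l=1}^{s}\omega(j_l)\,e_{j_l}$. Setting $W_l:=\sum_{i=1}^{l}\omega(j_i)$, one computes
\begin{equation*}
\sum_{l=1}^{s}\omega(j_l)\,e_{j_l}
=\sum_{l=1}^{s-1}W_l\bigl(e_{j_l}-e_{j_{l+1}}\bigr)+W_s\,e_{j_s}.
\end{equation*}
Because $e_{j_l}-e_{j_{l+1}}\ge 0$ and $e_{j_s}\ge 0$, and because $\sharp(R_l)\le \sharp(R)\le N+1$ (so Theorem \ref{T1}(iv) gives $W_l\le d_l$), the right-hand side is bounded by
\begin{equation*}
\sum_{l=1}^{s-1}d_l\bigl(e_{j_l}-e_{j_{l+1}}\bigr)+d_s\,e_{j_s}
=\sum_{l=1}^{s}(d_l-d_{l-1})\,e_{j_l}
=\sum_{j\in R'} e_j,
\end{equation*}
where the last equality uses the definition of $R'$ and that the jumps $d_l-d_{l-1}\in\{0,1\}$. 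Exponentiating gives the desired inequality.

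I expect the only real obstacle to be conceptual rather than computational: one must notice that the greedy choice of $R'$ corresponds exactly to the increments $d_l-d_{l-1}$, so that Abel summation reverses cleanly into a sum supported on $R'$. The monotonicity $e_{j_l}\ge e_{j_{l+1}}$ is what forces one to pick the $E_j$'s with the \emph{largest} logs, which is the intuitive content of the statement, and the nontrivial input from the theory of Nochka weights enters exclusively through the subgeneral-position bound $W_l\le d_l$ valid up to $|R_l|\le N+1$.
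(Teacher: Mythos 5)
The paper does not prove this proposition; it is quoted verbatim from Fujimoto (\cite[Proposition 2.4.15]{Fu3}) as a known ingredient of Nochka weight theory. Your argument is correct and is essentially the standard proof of that result: sorting the $E_j$ decreasingly, building $R'$ greedily so that the dimension increments $d_l-d_{l-1}$ are the indicator of $R'$, and using Theorem \ref{T1}(iv) in the form $W_l\le d_l$ (valid since $\sharp(R_l)\le\sharp(R)\le N+1$) inside an Abel summation — Fujimoto's own proof is the multiplicative version of the same telescoping. No gaps.
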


We need the three following results of Fujimoto combining the previously introduced concept of contact functions with Nochka weights:

\begin{theorem}(\cite[Theorem 2.5.3]{Fu3}) \label{PL1}
Let $H_1, \cdots, H_q$ be hyperplanes in $\mathbb P^k(\mathbb C)$ located in $N-$subgeneral position and let $\omega(j)$ $(1\leq j \leq q)$ and $\theta$ be Nochka weights and a Nochka constant for these hyperplanes. For every $\epsilon > 0$ there exist some positive numbers $\delta(>1)$ and $C,$ depending only on $\epsilon$ and $H_j\,$, $1\leq j \leq q,$ such that
\begin{align}\label{eq:T1}
&dd^c\log \dfrac{\Pi_{p=0}^{k-1}|F_p|^{2\epsilon}}{\Pi_{1\leq j\leq q, 0\leq p\leq k-1}\log^{2\omega(j)}(\delta/\phi_p(H_j))}\nonumber \\
&\geq C\bigg( \dfrac{|F_0|^{2\theta(q-2N+k-1)}|F_k|^2}{\Pi_{j=1}^q(|F(H_j)|^2\Pi_{p=0}^{k-1}\log^2(\delta/\phi_p(H_j)))^{\omega(j)}}\bigg)^{\frac{2}{k(k+1)}}dd^c|z|^2.
\end{align}
\end{theorem}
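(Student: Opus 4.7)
The plan is to establish the pointwise $(1,1)$-form inequality by combining three ingredients: the Pl\"ucker identities for the associated curves $F_p$, an Ahlfors-Schwarz estimate for the contact functions $\phi_p(H_j)$, and the reduction from $N$-subgeneral to general position via the Nochka weights from Theorem~\ref{T1} and Proposition~\ref{P1}. The $\tfrac{2}{k(k+1)}$-exponent on the right-hand side reflects the degree $\tfrac{k(k+1)}{2}$ of the top Wronskian entering $|F_k|$, and the excess exponent $\theta(q-2N+k-1)$ on $|F_0|$ is precisely the gap appearing in property (ii) of the Nochka weights.

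First I would use the standard Pl\"ucker formula
\[
dd^c\log|F_p|^2 \;=\; \frac{|F_{p-1}|^2\,|F_{p+1}|^2}{|F_p|^4}\,dd^c|z|^2
\]
(with the convention $|F_{-1}|\equiv 1$) to compute $\sum_{p=0}^{k-1} dd^c\log|F_p|^{2\epsilon}$; the telescoping structure of the sum is what will eventually feed the ratio $|F_k|^2/|F_0|^2$ into the final AM-GM step. Next, for each hyperplane $H_j$ and each $p$ a direct calculation based on $dd^c\log\phi_p(H_j)$ yields, once $\delta$ is large enough that $\log(\delta/\phi_p(H_j))\geq 1$ everywhere, an Ahlfors-Schwarz type lower bound of the shape
\[
-dd^c\log\log^2\!\bigl(\delta/\phi_p(H_j)\bigr) \;\geq\; \frac{1}{\log^2(\delta/\phi_p(H_j))}\cdot\frac{|F_{p-1}(H_j)|^2\,|F_{p+1}|^2}{|F_p|^2\,|F_p(H_j)|^2}\,dd^c|z|^2.
\]
Summing these contributions with Nochka weights $\omega(j)$ in front of the $j$-indexed terms yields a current bounded below by a positive multiple of a product over all $q$ hyperplanes, in which the denominators appear with the fractional exponents $\omega(j)$.

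To extract from this a bound that only involves $|F_0|$, $|F_k|$ and the $|F(H_j)|$, I would apply Proposition~\ref{P1} pointwise with $E_j$ chosen essentially as $|F(H_j)|^{-2}$ times the logarithmic factors: at each point this selects a subset $R'\subseteq Q$ with $\sharp R' = k+1$ whose vectors $\{A_j\}_{j\in R'}$ form a basis of $\mathbb{C}^{k+1}$, and hence $|F_0|^2 \asymp \sum_{j\in R'}|F(H_j)|^2$ up to a positive constant depending only on the chosen hyperplanes. This reduction converts the weighted product over $Q$ into an unweighted product over $R'$, after which an application of the arithmetic-geometric mean inequality to the telescoping Pl\"ucker terms from the previous step produces exactly the $\bigl(\,\cdot\,\bigr)^{2/(k(k+1))}$ shape on the right-hand side.

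The main obstacle is the simultaneous bookkeeping of the exponents: $\epsilon$ on the left must be chosen small enough for the (signed) $dd^c\log|F_p|^{2\epsilon}$ terms to be dominated by the positive contact-function contribution; $\delta$ must be taken large enough so that $\log(\delta/\phi_p(H_j))\geq 1$ uniformly in $j$ and $p$; and the Nochka excess $\theta(q-2N+k-1)$ on $|F_0|$ has to emerge from property (ii) of Theorem~\ref{T1} after the reduction supplied by Proposition~\ref{P1}. A secondary but delicate point is that everything must be interpreted as inequalities between positive currents, so one has to verify that the divisor contributions coming from the zeros of the $F(H_j)$ and of the Wronskians are non-negative, and can therefore be safely dropped.
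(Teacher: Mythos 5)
This statement is quoted verbatim from Fujimoto's monograph (\cite[Theorem 2.5.3]{Fu3}); the paper gives no proof of it, so the only meaningful comparison is with Fujimoto's original argument. Your outline has the right architecture --- Pl\"ucker formulas for $dd^c\log|F_p|^2$, a curvature-type lower bound extracted from the contact functions, the Nochka machinery of Theorem~\ref{T1} and Proposition~\ref{P1}, and a final weighted AM--GM producing the exponent $2/(k(k+1))$ --- and your remarks on where the excess $\theta(q-2N+k-1)$ and the telescoping come from are on target.

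However, two of your displayed steps are wrong as stated and one essential step is missing. First, the current $-dd^c\log\log^2(\delta/\phi_p(H_j))$ is \emph{not} bounded below by a positive term on its own: writing $u=\log(\delta/\phi_p(H_j))$ one has
$$-dd^c\log u=\frac{dd^c\log|F_p(H_j)|^2-dd^c\log|F_p|^2}{u}+\frac{du\wedge d^cu}{u^2},$$
and the middle term contains the non-positive contribution $-u^{-1}\,dd^c\log|F_p|^2=-u^{-1}|F_{p-1}|^2|F_{p+1}|^2|F_p|^{-4}\,dd^c|z|^2$. This is exactly what the $\epsilon\,dd^c\log|F_p|^2$ coming from the numerator must absorb, and it is why $\delta$ must be chosen large \emph{depending on} $\epsilon$ (so that $u^{-1}$ is small compared with $\epsilon$); your remark that ``$\epsilon$ must be chosen small enough to be dominated by the positive contact-function contribution'' has the roles of the two terms reversed, and in fact no smallness of $\epsilon$ is needed for this theorem. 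Second, the useful lower bound has the shape $\frac{\phi_{p+1}(H_j)}{\phi_p(H_j)\log^2(\delta/\phi_p(H_j))}\cdot\frac{|F_{p-1}|^2|F_{p+1}|^2}{|F_p|^4}$, i.e.\ it involves $|F_{p+1}(H_j)|$ and $|F_p(H_j)|$, not $|F_{p-1}(H_j)|$; only with this form do the ratios telescope over $p=0,\dots,k-1$ to $\phi_k(H_j)/\phi_0(H_j)=|F_0|^2/|F(H_j)|^2$, which is where the factors $|F(H_j)|^{-2\omega(j)}$ and the power of $|F_0|$ on the right-hand side of (\ref{eq:T1}) actually originate. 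Third, Proposition~\ref{P1} only applies to subsets $R$ with $0<\sharp(R)\le N+1$, so you cannot ``apply it pointwise'' to all of $Q$: at each point one must first single out the at most $N+1$ hyperplanes for which $|F(H_j)|/|F|$ is small (the factors attached to the remaining indices being bounded below by a constant depending only on the $H_j$), and only then invoke Proposition~\ref{P1} on that subset to pass to a basis $R'$. Without this reduction the passage from the weighted product over $Q$ to the unweighted product over a basis does not go through.
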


\begin{proposition}(\cite[Proposition 2.5.7]{Fu3})\label{P} \
 Set $\sigma_p=p(p+1)/2$ for $0 \leq p \leq k$ and $\tau_k = \sum_{p=0}^k\sigma_p.$ Then,
\begin{align}
dd^c\log(|F_0|^2|F_1|^2\cdots|F_{k-1}|^2)\geq \dfrac{\tau_k}{\sigma_k}\bigg(\dfrac{|F_0|^2|F_1|^2\cdots|F_{k}|^2}{|F_0|^{2\sigma_{k+1}}}\bigg)^{1/\tau_k}dd^c|z|^2.
\end{align}
\end{proposition}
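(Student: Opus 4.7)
My plan is to reduce the proposition to the infinitesimal Pl\"ucker formula for the associated curves $F_p$, and then extract the claimed exponent structure via a single weighted AM-GM inequality whose weights are chosen to match the right-hand side.

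The starting point is the standard identity
\begin{equation*}
dd^c \log |F_p|^2 \;=\; \frac{|F_{p-1}|^2 |F_{p+1}|^2}{|F_p|^4}\, dd^c|z|^2, \qquad 0\le p\le k-1,
\end{equation*}
with the convention $|F_{-1}|^2 := 1$, valid pointwise off the discrete zero set of $F_p$. This is obtained by a direct $\partial\bar\partial$-computation from the Wronskian definition of $F_p$ combined with the Lagrange identity $|F_p|^2\,|F^{(p+1)}|^2 - |\langle F_p,\,\cdot\,\rangle|^2 \cdots$ relating the norms of $F_{p-1}$, $F_p$, $F_{p+1}$. Summing these identities yields
\begin{equation*}
dd^c \log \prod_{p=0}^{k-1}|F_p|^2 \;=\; \Bigl(\sum_{p=0}^{k-1} a_p\Bigr)\, dd^c|z|^2, \qquad a_p := \frac{|F_{p-1}|^2|F_{p+1}|^2}{|F_p|^4},
\end{equation*}
so the task reduces to bounding $\sum_p a_p$ from below by the geometric-mean quantity on the right of the proposition.

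I would then apply the weighted AM-GM inequality with weights $w_p := \sigma_{k-p}$ for $p = 0,\dots,k-1$. Since $\sigma_0 = 0$ one has $\sum_p w_p = \sigma_1+\cdots+\sigma_k = \tau_k$ and $\max_p w_p = w_0 = \sigma_k$, whence
\begin{equation*}
\sigma_k \sum_{p=0}^{k-1} a_p \;\ge\; \sum_{p=0}^{k-1} w_p\, a_p \;\ge\; \tau_k \prod_{p=0}^{k-1} a_p^{\,w_p/\tau_k}.
\end{equation*}
Setting $L_p := \log|F_p|^2$, a short bookkeeping of logarithmic exponents, using the elementary identity $\sigma_{q+1}+\sigma_{q-1}-2\sigma_q = 1$ for the interior indices together with the boundary relation $\sigma_{k-1}-2\sigma_k = 1-\sigma_{k+1}$, shows that $\prod_p a_p^{\sigma_{k-p}/\tau_k}$ collapses to $\bigl(|F_0|^2|F_1|^2\cdots|F_k|^2/|F_0|^{2\sigma_{k+1}}\bigr)^{1/\tau_k}$, and the proposition follows.

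The only genuine difficulty is identifying the correct weights $w_p = \sigma_{k-p}$. They are forced, up to an overall scaling, by requiring the $L_q$ coefficient in the weighted sum $\sum_p w_p(L_{p-1}+L_{p+1}-2L_p)$ to equal $1$ for each interior index $1\le q\le k-1$; this is a second-order inhomogeneous linear recursion whose particular solution is quadratic in $q$, producing exactly the reversed triangular numbers, and the compatibility at the boundary with the coefficient $-\sigma_{k+1}$ of $L_0$ then holds automatically. Once the weights are guessed, everything else is a single application of AM-GM and routine collection of exponents.
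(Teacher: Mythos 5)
The paper itself gives no proof of this proposition; it is quoted verbatim from Fujimoto's book (\cite[Proposition 2.5.7]{Fu3}), so there is nothing in the paper to compare against except Fujimoto's original argument. Your proof is correct and is essentially that standard argument: the infinitesimal Pl\"ucker formula $dd^c\log|F_p|^2=\frac{|F_{p-1}|^2|F_{p+1}|^2}{|F_p|^4}\,dd^c|z|^2$ reduces the claim to the pointwise inequality $\sigma_k\sum_{p=0}^{k-1}a_p\geq\tau_k\prod_{p=0}^{k-1}a_p^{\sigma_{k-p}/\tau_k}$, and your weights $w_p=\sigma_{k-p}$ (with $\sum_p w_p=\tau_k$, $\max_p w_p=\sigma_k$) together with the exponent bookkeeping via $\sigma_{j+1}+\sigma_{j-1}-2\sigma_j=1$ and the boundary identity $\sigma_{k-1}-2\sigma_k=1-\sigma_{k+1}$ all check out, so the product of the $a_p^{w_p}$ does collapse to $|F_0|^2\cdots|F_k|^2/|F_0|^{2\sigma_{k+1}}$ as claimed.
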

\begin{proposition} (\cite[Lemma 3.2.13]{Fu3})
\label{P7}
 Let $f$ be a non-degenerate holomorphic map of a domain in  $\mathbb{C}$ into $\mathbb{P}^k(\mathbb{C})$ with reduced representation $f=(f_0:\cdots :f_k)$ and let $H_1,\cdots,H_q$ be hyperplanes located in $N$-subgeneral position $( q > 2N - k + 1)$ with Nochka weights $\omega(1),\cdots,\omega(q)$ respectively. Then,
$$\nu_\phi + \sum^{q}_{j=1}\omega(j)\cdot\min (\nu_{(f, H_j)}, k ) \geq 0,$$
where $\phi = \dfrac{|F_k|}{\Pi_{j=1}^q \mid F(H_j)\mid^{\omega (j)}}.$
\end{proposition}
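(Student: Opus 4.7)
The plan is to establish the divisor inequality pointwise. Since $\wedge^{k+1}\mathbb{C}^{k+1}$ is one-dimensional, $|F_k|$ coincides (up to a constant basis factor) with $|W(f_0,\ldots,f_k)|$, and $|F(H_j)| = |(f,H_j)|$. Hence $\nu_\phi = \nu_{W(f_0,\ldots,f_k)} - \sum_j \omega(j)\,\nu_{(f,H_j)}$, and the identity $\min(\alpha,k) = \alpha - \max(0,\alpha - k)$ reduces the proposition to the local bound
$$\nu_{W(f_0,\ldots,f_k)}(z_0) \;\geq\; \sum_{j=1}^q \omega(j)\,\max\bigl(0,\nu_{(f,H_j)}(z_0)-k\bigr)$$
at every point $z_0$ of the domain.

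Fix such a $z_0$, write $\beta_j := \nu_{(f,H_j)}(z_0)$, and set $R := \{j : \beta_j > 0\}$. Since $F(z_0)\neq 0$ (reduced representation) is orthogonal to every $A_j$ with $j\in R$, one has $d(R)\leq k$, hence $|R|\leq N$ by $N$-subgeneral position. Two tools then combine. First, Proposition~\ref{P1} applied with $E_j := \exp(\max(0,\beta_j - k)) \geq 1$ yields $R' \subseteq R$ with $|R'| = d(R')=d(R)$ and
$$\sum_{j \in R}\omega(j)\max(0,\beta_j - k) \;\leq\; \sum_{j \in R'}\max(0,\beta_j - k).$$
Second, extend $R'$ to $R'' = \{j_0,\ldots,j_k\}$ of cardinality $k+1$, with $\{A_j\}_{j\in R''}$ a basis of $\mathbb{C}^{k+1}$ and the new indices drawn from $\{1,\ldots,q\}\setminus R$; this is possible because if a proper subspace $V \supseteq \operatorname{span}\{A_j : j \in R'\}$ contained every $A_j$ for $j \notin R$, then $V$ would contain all of $A_1,\ldots,A_q$, contradicting the fact that any $N+1$ of them span $\mathbb{C}^{k+1}$. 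The basis change gives $W(f_0,\ldots,f_k) = c\cdot W(F(H_{j_0}),\ldots,F(H_{j_k}))$ with $c\neq 0$ by multilinearity of the Wronskian. In the latter Wronskian, every entry of column $\ell$ is a derivative of order at most $k$ of $(f,H_{j_\ell})$, hence vanishes at $z_0$ to order at least $\max(0,\beta_{j_\ell}-k)$. Factoring $(z-z_0)^{\max(0,\beta_{j_\ell}-k)}$ out of each column, and observing that $\beta_j = 0$ for $j \in R''\setminus R'$, yields
$$\nu_{W(f_0,\ldots,f_k)}(z_0) \;\geq\; \sum_{\ell=0}^{k}\max(0,\beta_{j_\ell}-k) \;=\; \sum_{j\in R'}\max(0,\beta_j - k),$$
which when chained with the Nochka inequality above closes the pointwise estimate.

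The delicate part of the plan is the combinatorial extension of $R'$ to a basis drawn entirely from outside $R$: were even one augmenting index allowed to lie in $R$, the column-factoring bound on the Wronskian would over-count vanishing and the final tally would not align with the Nochka inequality. The feasibility of this extension rests on the $N$-subgeneral position hypothesis, which forces $\{A_1,\ldots,A_q\}$ to span $\mathbb{C}^{k+1}$. The remaining pieces — the Nochka weight inequality, the linear change of basis in the Wronskian, and the elementary derivative-vanishing estimate — assemble routinely.
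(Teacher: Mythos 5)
The paper does not prove this proposition itself --- it is quoted verbatim from Fujimoto (\cite[Lemma 3.2.13]{Fu3}) --- and your argument is correct and follows essentially the standard route of that source: reduce to the pointwise bound $\nu_{W(f_0,\ldots,f_k)}(z_0)\geq\sum_j\omega(j)\max(0,\nu_{(f,H_j)}(z_0)-k)$, control the Nochka weights via Proposition~\ref{P1}, and estimate the Wronskian after a change of basis to $F(H_{j_0}),\ldots,F(H_{j_k})$. The only point worth spelling out is that the augmenting indices can indeed be taken outside $R$ because $R'\subseteq R$ with $d(R')=d(R)$ forces $\operatorname{span}\{A_j: j\in R'\}=\operatorname{span}\{A_j: j\in R\}$, so a proper subspace containing the $A_j$ for $j\in R'$ and for $j\notin R$ would contain all of $A_1,\ldots,A_q$; you use this implicitly and it is exactly what makes your contradiction work.
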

\begin{lemma}(Generalized Schwarz's Lemma \cite{Ah}). \label{L3}
Let $v$ be a non-negative real-valued continuous subharmonic function on $\Delta_R.$ If $v$ satisfies the inequality  $\Delta\log v \geq v^2$ in the sense of distribution, then
$$v(z) \leq \dfrac{2R}{R^2 - |z|^2}.$$
\end{lemma}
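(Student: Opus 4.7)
The plan is to compare $v$ with the Poincaré conformal factor
\begin{equation*}
\lambda_R(z) := \dfrac{2R}{R^2-|z|^2}
\end{equation*}
on $\Delta_R$ and to show directly that $v \leq \lambda_R$. A direct computation shows that $\lambda_R$ is smooth and positive and satisfies $\Delta \log \lambda_R = \lambda_R^2$; i.e.\ the Poincaré metric $\lambda_R |dz|$ has constant Gauss curvature $-1$. The lemma then asserts that any continuous nonnegative conformal factor $v$ whose ``curvature'' is at most $-1$ in the distributional sense is dominated by $\lambda_R$.

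To execute this, I would first pass to a slightly smaller disk: fix $r<R$, set $\lambda_r(z) := \frac{2r}{r^2-|z|^2}$, and prove $v \leq \lambda_r$ on $\Delta_r$. Let $u := v/\lambda_r$. Since $v$ is continuous on $\overline{\Delta_r}$ and $\lambda_r(z) \to \infty$ as $|z|\to r$, $u$ extends continuously to $0$ on $\partial\Delta_r$, and hence attains its supremum at some interior $z_0 \in \Delta_r$. Suppose for contradiction $u(z_0)>1$; then $v(z_0)>\lambda_r(z_0)>0$, so on a neighborhood $U$ of $z_0$ we have $v>0$, $\log v$ is continuous, and
\begin{equation*}
\Delta\bigl(\log v - \log \lambda_r\bigr) \;\geq\; v^2 - \lambda_r^2
\end{equation*}
in the distributional sense on $U$. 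By continuity the right-hand side is bounded below by some constant $c>0$ on a smaller neighborhood $V$ of $z_0$, which contradicts the distributional maximum principle applied to $\log v - \log \lambda_r$ at its interior maximum $z_0$. Hence $u \leq 1$ on $\Delta_r$; letting $r \uparrow R$ gives $v \leq \lambda_R$ on $\Delta_R$, as desired.

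The main technical obstacle is making that maximum-principle step rigorous when the inequality $\Delta \log v \geq v^2$ is only distributional and $v$ merely continuous. The standard device is to regularize $\log v$ on $\{v>0\}$ by convolution with a smooth mollifier $\rho_\varepsilon$: the smoothed function $(\log v)\ast \rho_\varepsilon$ is smooth and still satisfies $\Delta\bigl((\log v)\ast \rho_\varepsilon\bigr) \geq c/2$ pointwise on a fixed neighborhood of $z_0$ for all sufficiently small $\varepsilon$. One then subtracts a quadratic penalty such as $(c/8)|z-z_0|^2$ to convert the strict inequality into honest subharmonicity with a strict interior maximum, which is ruled out by the strong maximum principle for subharmonic functions; passing to the limit $\varepsilon \to 0$ by uniform convergence on $V$ yields the contradiction needed above. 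Points where $v(z_0)=0$ require no argument, since there $u(z_0)=0 \leq 1$ automatically.
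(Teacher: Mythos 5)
The paper does not prove this lemma at all — it is quoted verbatim from Ahlfors \cite{Ah} (see also Fujimoto's book) — and your argument is precisely the classical Ahlfors--Schwarz comparison: exhaust by $\Delta_r$, compare $v$ with the Poincar\'e factor $\lambda_r$ satisfying $\Delta\log\lambda_r=\lambda_r^2$, and rule out an interior maximum of $v/\lambda_r$ exceeding $1$ via mollification and the maximum principle. The proof is correct as written (the mollification-plus-quadratic-penalty step is the standard way to handle the merely distributional hypothesis), so there is nothing to compare against in the paper itself.
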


\begin{lemma}\label{ML}
\ Let $f= ( f_0: \cdots : f_k ) : \Delta_R \rightarrow \mathbb P^k (\mathbb{C})$ be a non-degenerate holomorphic map, $ H_1, ... , H_q$ be hyperplanes in $ \mathbb P^k (\mathbb{C}) $ in $N-$subgeneral position ($N \geq k$ and $q > 2N-k+1$), and $\omega(j)$ be their Nochka weights. If  $$\gamma := \sum_{j=1}^{q}\omega(j)(1- \dfrac{k}{m_j}) - (k+1) > 0$$ and $f$ is ramified over $H_j$ with multiplicity at least $m_j \geq k$ for each $j, (1\leq j \leq q),$ then 
for any positive $\epsilon$ with 
$\gamma  > \epsilon \sigma_{k+1}$
there exists a positive constant  $C,$ depending only on $\epsilon, H_j,m_j,\omega(j) (1\leq j \leq q),$ such that
\begin{equation*}
|F|^{\gamma - \epsilon\sigma_{k+1}}\dfrac{|F_k|^{1+\epsilon}\prod_{j=1}^{q}\prod_{p=0}^{k-1}|F_p(H_j)|^{\epsilon /q}}{\prod_{j=1}^{q}|F(H_j)|^{\omega (j)(1-\frac{k}{m_j})}}\leqslant C(\dfrac{2R}{R^2 -|z|^2})^{\sigma_k +\epsilon\tau_k}.
\end{equation*}
\end{lemma}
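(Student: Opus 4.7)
The plan is to construct an auxiliary pseudo-metric on $\Delta_R$ and apply the Generalized Schwarz Lemma. Denote by $w$ the function on the left-hand side of the desired inequality, set $\alpha := \sigma_k + \epsilon\tau_k$, and define $v := C_0\, w^{1/\alpha}$ for a positive constant $C_0$ to be chosen. The goal is to show that $v$ is non-negative and continuous on $\Delta_R$ and satisfies $\Delta \log v \geq v^2$ in the sense of distributions; Lemma \ref{L3} then gives $v \leq 2R/(R^2-|z|^2)$, which is exactly the stated inequality after rearrangement.

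The first step is continuity of $w$. The only candidates for singularities lie at zeros of $|F(H_j)|$, which enter the denominator with exponent $\omega(j)(1-k/m_j)$. Combining the ramification hypothesis $m_j \geq k$ (which gives $\min(\nu_{(f,H_j)},k)\leq (k/m_j)\nu_{(f,H_j)}$) with Proposition \ref{P7} yields
$$
\nu_{|F_k|} \;\geq\; \sum_{j=1}^{q}\omega(j)\Bigl(1-\frac{k}{m_j}\Bigr)\nu_{(f,H_j)},
$$
so the factor $|F_k|^{1+\epsilon}$ in the numerator exceeds the singular denominator by at least $\epsilon\,\nu_{|F_k|}\geq 0$ in order at every such point. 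Thus $w$ extends continuously to $[0,\infty)$ and is smooth off a discrete exceptional set, where the singular Poincaré--Lelong contributions to $dd^c\log w$ turn out to be non-negative measures.

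The heart of the argument is the curvature estimate $dd^c \log v \geq v^2\, dd^c|z|^2$ off the exceptional set. Rewriting the relevant logarithms via $\log|F_p(H_j)| = \tfrac{1}{2}\log \phi_p(H_j) + \log|F_p|$ and $\log|F(H_j)| = \tfrac{1}{2}\log\phi_0(H_j) + \log|F_0|$, one takes a weighted combination $\lambda_1\cdot(\text{Theorem \ref{PL1}}) + \lambda_2\cdot(\text{Proposition \ref{P}})$, with $\lambda_1$ of order $\epsilon$ and $\lambda_2$ of order $1$, chosen so that the coefficients of the $\log|F_p|$-terms match those appearing in $\log w$. The specific value $\gamma_0 := \gamma - \epsilon\sigma_{k+1}$ in the exponent of $|F|$ is forced precisely by this bookkeeping, together with the identity $\sum_j\omega(j)(1-k/m_j) = \gamma + k + 1$. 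The right-hand sides of Theorem \ref{PL1} and Proposition \ref{P}, with respective rates $1/\sigma_k = 2/k(k+1)$ and $1/\tau_k$, are then merged through the weighted arithmetic--geometric mean inequality into a single lower bound proportional to $w^{2/\alpha}$; the exponent $\alpha = \sigma_k + \epsilon\tau_k$ emerges exactly as the convex combination of these two curvature scales.

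The main technical obstacle is the presence of the factors $\log^{2\omega(j)}(\delta/\phi_p(H_j))$ in Theorem \ref{PL1}, which are not dominated by any power of $|F_p(H_j)|$. The standard remedy is the elementary bound $\log x \leq C_\eta\, x^{\eta}$ on $[1,\infty)$ valid for any $\eta > 0$, applied with $\eta$ proportional to $\epsilon/q$: this gives $\log(\delta/\phi_p(H_j)) \leq C'\,\phi_p(H_j)^{-\eta}$ and allows the offending logarithmic factors to be absorbed exactly by the factor $\prod_{j,p}|F_p(H_j)|^{\epsilon/q}$ that has been built into $w$ for this very purpose. Once $\Delta\log v \geq v^2$ is established on the complement of the discrete singular set, continuity of $v$ extends the inequality distributionally across this set by a standard removable-singularity argument for subharmonic functions, and Lemma \ref{L3} yields the claimed bound.
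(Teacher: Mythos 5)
Your overall architecture and every ingredient (Proposition \ref{P7} for continuity, Theorem \ref{PL1} and Proposition \ref{P} merged by the weighted arithmetic--geometric mean inequality, the elementary bound $x^{\eta}\log^{\omega(j)}(\delta/x^{2})\leq C$, and Lemma \ref{L3}) coincide with the paper's, but there is a genuine gap in \emph{where} you perform the absorption of the logarithmic factors. You propose to verify $\Delta\log v\geq v^{2}$ for $v=C_{0}w^{1/\alpha}$, where $w$ is the left-hand side of the lemma and $\alpha=\sigma_{k}+\epsilon\tau_{k}$, and to eliminate the factors $\log^{2\omega(j)}(\delta/\phi_{p}(H_{j}))$ occurring in Theorem \ref{PL1} by the pointwise bound $\phi_{p}(H_{j})^{\eta}\log^{\omega(j)}(\delta/\phi_{p}(H_{j}))\leq C'$. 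That bound compares the \emph{values} of $w^{1/\alpha}$ with those of the auxiliary function
$$\eta_{z}:=\Bigl(\dfrac{|F|^{\gamma-\epsilon\sigma_{k+1}}\,|F_{k}|\,\prod_{p=0}^{k}|F_{p}|^{\epsilon}}{\prod_{j=1}^{q}\bigl(|F(H_{j})|^{1-k/m_{j}}\prod_{p=0}^{k-1}\log(\delta/\phi_{p}(H_{j}))\bigr)^{\omega(j)}}\Bigr)^{1/\alpha},$$
namely $w^{1/\alpha}\leq C''\eta_{z}$, but it says nothing about their Laplacians: one computes
$$\log w-\alpha\log\eta_{z}=\sum_{j,p}\Bigl(\tfrac{\epsilon}{2q}\log\phi_{p}(H_{j})+\omega(j)\log\log(\delta/\phi_{p}(H_{j}))\Bigr),$$
a difference of subharmonic functions whose $dd^{c}$ has no definite sign. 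Hence Theorem \ref{PL1} yields a lower bound for $dd^{c}\log\eta_{z}$, not for $dd^{c}\log w$; the inequality $\Delta\log v\geq v^{2}$ for your $v$ is not established by the combination you describe, and it is precisely the failure of such ``log-free'' curvature inequalities that forces the logarithmic correction terms into Fujimoto's Theorem \ref{PL1} in the first place.

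The repair is a reordering, and it is what the paper does: prove $dd^{c}\log\eta_{z}\geq C_{1}\eta_{z}^{2}\,dd^{c}|z|^{2}$ (checking continuity of $\eta_{z}$ at the zeros of the $F(H_{j})$ exactly as you check it for $w$, via Proposition \ref{P7} and $m_{j}\geq k$), apply Lemma \ref{L3} to $\sqrt{C_{1}}\,\eta_{z}$ to obtain $\eta_{z}\leq C_{2}\,2R/(R^{2}-|z|^{2})$, and only then invoke $\sup_{0<x\leq1}x^{\epsilon/q}\log^{\omega(j)}(\delta/x^{2})<\infty$ to pass from $\eta_{z}$ to $w^{1/\alpha}$, i.e.\ perform the absorption on the values \emph{after} the Schwarz lemma. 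With that single relocation your argument becomes the paper's proof.
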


\begin{proof}
For an arbitrary holomorphic local coordinate $z$ and $\delta(>1) $ chosen as in Theorem~\ref{PL1} we set
\begin{equation*}
\eta_z := \bigg( \dfrac{|F|^{\gamma -\epsilon\sigma_{k+1}}.|F_k|.\prod_{p=0}^{k}|F_p|^{\epsilon}}{\prod_{j=1}^{q}(|F(H_j)|^{(1-\frac{k}{m_j})}\Pi_{p=0}^{k-1}\log(\delta/\phi_p(H_j)))^{\omega (j)}}\bigg)^{\frac{1}{\sigma_k +\epsilon\tau_k}},
\end{equation*}
and define the pseudometric $d\tau_z^2 := \eta_z^2|dz|^2.$ Using Proposition \ref{W} we can see that 
\begin{align*}
d\tau_{\xi} &:= \bigg( \dfrac{|F|^{\gamma -\epsilon\sigma_{k+1}}.|(F_k)_{\xi}|.\prod_{p=0}^{k}|(F_p)_{\xi}|^{\epsilon}}{\prod_{j=1}^{q}(|F(H_j)|^{(1-\frac{k}{m_j})}\Pi_{p=0}^{k-1}\log(\delta/\phi_p(H_j)))^{\omega (j)}}\bigg)^{\frac{1}{\sigma_k +\epsilon\tau_k}}|d\xi|\\
&= \bigg( \dfrac{|F|^{\gamma -\epsilon\sigma_{k+1}}.|(F_k)_{z}||\frac{dz}{d\xi}|^{\sigma_k}.\prod_{p=0}^{k}|(F_p)_{z}|^{\epsilon}.|\frac{dz}{d\xi}|^{\sum_{j=0}^{k}\epsilon\frac{j(j+1)}{2}}}{\prod_{j=1}^{q}(|F(H_j)|^{(1-\frac{k}{m_j})}\Pi_{p=0}^{k-1}\log(\delta/\phi_p(H_j)))^{\omega (j)}}\bigg)^{\frac{1}{\sigma_k +\epsilon\tau_k}}|\dfrac{d\xi}{dz}|.|dz|\\
&= \bigg( \dfrac{|F|^{\gamma -\epsilon\sigma_{k+1}}.|(F_k)_{z}|.\prod_{p=0}^{k}|(F_p)_{z}|^{\epsilon}.|\frac{dz}{d\xi}|^{\sigma_k +\epsilon\tau_k}}{\prod_{j=1}^{q}(|F(H_j)|^{(1-\frac{k}{m_j})}\Pi_{p=0}^{k-1}\log(\delta/\phi_p(H_j)))^{\omega (j)}}\bigg)^{\frac{1}{\sigma_k +\epsilon\tau_k}}|\dfrac{d\xi}{dz}|.|dz|\\
&=d\tau_{z}.
\end{align*}
Thus $d\tau_z^2$ is independent of the choice of the local coordinate $z.$ We will denote $d\tau_z^2$ by $d\tau^2$ for convenience.

We now show that $d\tau$ is continuous on $\Delta_R.$ Indeed, it is easy to see that $d\tau$ is continuous at every point $z_0$ with $\Pi_{j=1}^qF(H_j)(z_0) \not=0.$ Now we take a point $z_0 $ such that $\Pi_{j=1}^qF(H_j)(z_0) =0.$ We have
\begin{align*}
\nu_{d\tau}(z_0)& \geq \dfrac{1}{\sigma_k + \epsilon\tau_k}\bigg( \nu_{F_k}(z_0)-\sum_{j=1}^q\omega(j)\nu_{F(H_j)}(z_0)(1-\dfrac{k}{m_j}) \bigg) \\ 
&=\dfrac{1}{\sigma_k + \epsilon\tau_k}\bigg(\nu_{F_k}(z_0)-\sum_{j=1}^q\omega(j)\nu_{F(H_j)}(z_0)+\sum_{j=1}^q\omega(j)\dfrac{k}{m_j}\nu_{F(H_j)}(z_0) \bigg).
\end{align*}
Combining this with Proposition \ref{P7} we get
\begin{align*}
\nu_{d\tau}(z_0)&\geq \dfrac{1}{\sigma_k + \epsilon\tau_k}\bigg(-\sum_{j=1}^q\omega(j)\min\{\nu_{F(H_j)}(z_0), k \}+\sum_{j=1}^q\omega(j)\dfrac{k}{m_j}\nu_{F(H_j)}(z_0) \bigg).
\end{align*}
By assumption, it holds that $\nu_{F(H_j)}(z_0) \geq m_j \geq k$ or $\nu_{F(H_j)}(z_0)=0,$ so $\nu_{d\tau}(z_0) \geq 0.$ This concludes the proof that $d\tau$ is continuous on $\Delta_R.$

Using Proposition \ref{P}, Theorem \ref{PL1} and noting that $dd^c\log|F_k| = 0$, we have
\begin{align*}
dd^c\log\eta_z&=\dfrac{\gamma - \epsilon\sigma_{k+1}}{\sigma_{k}+\epsilon\tau_k}dd^c\log|F|+\dfrac{\epsilon}{4(\sigma_{k}+\epsilon\tau_k)}dd^c\log(|F_0|^2\cdots|F_{k-1}|^2)\\ 
& + \dfrac{1}{2(\sigma_k +\epsilon\tau_k)}dd^c\log\dfrac{\prod_{p=0}^{k-1}|F_p|^{2(\frac{\epsilon}{2})}}{\prod_{j=1}^{q}\Pi_{p=0}^{k-1}\log^{2\omega (j)}(\delta/\phi_p(H_j))}\\
&\geq \dfrac{\epsilon}{4(\sigma_{k}+\epsilon\tau_k)}\dfrac{\tau_k}{\sigma_k}\bigg(\dfrac{|F_0|^2|F_1|^2\cdots|F_{k}|^2}{|F_0|^{2\sigma_{k+1}}}\bigg)^{1/\tau_k}dd^c|z|^2\\ 
& + C_0\bigg( \dfrac{|F_0|^{2\theta(q-2N+k-1)}|F_k|^2}{\Pi_{j=1}^q(|F(H_j)|^2\Pi_{p=0}^{k-1}\log^2(\delta/\phi_p(H_j)))^{\omega(j)}}\bigg)^{\frac{2}{k(k+1)}}dd^c|z|^2\\
&\geq \min \{\dfrac{1}{4\sigma_k(\sigma_{k}+\epsilon\tau_k)}, \dfrac{C_0}{\sigma_k} \}\bigg(\epsilon\tau_k\bigg(\dfrac{|F_0|^2|F_1|^2\cdots|F_{k}|^2}{|F_0|^{2\sigma_{k+1}}}\bigg)^{1/\tau_k}\\
&+ \sigma_k\bigg( \dfrac{|F_0|^{2\theta(q-2N+k-1)}|F_k|^2}{\Pi_{j=1}^q(|F(H_j)|^2\Pi_{p=0}^{k-1}\log^2(\delta/\phi_p(H_j)))^{\omega(j)}}\bigg)^{\frac{1}{\sigma_k}}\bigg) dd^c|z|^2
\end{align*}
where $C_0$ is the positive constant. So,  by using the basic inequality 
$$ \alpha A + \beta B \geq (\alpha + \beta) A^{\frac{\alpha}{\alpha+ \beta}}B^{\frac{\beta}{\alpha + \beta}} \text{ for all }\alpha, \beta, A, B > 0  ,$$ we can find a positive constant $C_1$ satisfing the following\\
\begin{align*}
dd^c\log\eta_z &\geq C_1\bigg( \dfrac{|F|^{\theta (q-2N+k-1) -\epsilon\sigma_{k+1}}.|F_k|.\prod_{p=0}^{k}|F_p|^{\epsilon}}{\prod_{j=1}^{q}(|F(H_j)|\cdot \Pi_{p=0}^{k-1}\log(\delta/\phi_p(H_j)))^{\omega (j)}}\bigg)^{\frac{2}{\sigma_k +\epsilon\tau_k}}dd^c|z|^2\\
&= C_1\bigg( \dfrac{|F|^{\sum_{j=1}^q \omega (j) - k-1 -\epsilon\sigma_{k+1}}.|F_k|.\prod_{p=0}^{k}|F_p|^{\epsilon}}{\prod_{j=1}^{q}(|F(H_j)|\cdot \Pi_{p=0}^{k-1}\log(\delta/\phi_p(H_j)))^{\omega (j)}}\bigg)^{\frac{2}{\sigma_k +\epsilon\tau_k}}dd^c|z|^2 \  \text{ (by Theorem \ref{T1}) }\\
&= C_1\bigg( \dfrac{|F|^{\gamma -\epsilon\sigma_{k+1}}.|F_k|.\prod_{p=0}^{k}|F_p|^{\epsilon}\prod_{j=1}^{q}\bigg(\dfrac{|F|}{|F(H_j)|}\bigg)^{\frac{k}{m_j}\omega (j)}}{\prod_{j=1}^{q}(|F(H_j)|^{(1-\frac{k}{m_j})}\cdot \Pi_{p=0}^{k-1}\log(\delta/\phi_p(H_j)))^{\omega (j)}}\bigg)^{\frac{2}{\sigma_k +\epsilon\tau_k}}dd^c|z|^2.
\end{align*}
On the other hand, 
\begin{equation*}
\bigg(\dfrac{|F(H_j)|}{|F|}\bigg)^{\frac{k}{m_j}\omega (j)} \leq 1 \text{ for all } j = 1, 2, ..., q,
\end{equation*}
so we get
\begin{align*}
dd^c\log\eta_z &\geq C_1\bigg( \dfrac{|F|^{\gamma -\epsilon\sigma_{k+1}}.|F_k|.\prod_{p=0}^{k}|F_p|^{\epsilon}}{\prod_{j=1}^{q}(|F(H_j)|^{(1-\frac{k}{m_j})}\cdot \Pi_{p=0}^{k-1}\log(\delta/\phi_p(H_j)))^{\omega (j)}}\bigg)^{\frac{2}{\sigma_k +\epsilon\tau_k}}dd^c|z|^2\\
&=C_1\eta_z^2dd^c|z|^2.
\end{align*}
We now use Lemma \ref{L3} to show the following
\begin{equation*}
\bigg( \dfrac{|F|^{\gamma -\epsilon\sigma_{k+1}}.|F_k|.\prod_{p=0}^{k}|F_p|^{\epsilon}}{\prod_{j=1}^{q}(|F(H_j)|^{(1-\frac{k}{m_j})}\cdot \Pi_{p=0}^{k-1}\log(\delta/\phi_p(H_j)))^{\omega (j)}}\bigg)^{\frac{1}{\sigma_k +\epsilon\tau_k}} \leq C_2\dfrac{2R}{R^2 - |z|^2}.
\end{equation*}
We then have
\begin{align*}
&\bigg( \dfrac{|F|^{\gamma -\epsilon\sigma_{k+1}}.|F_k|^{1+\epsilon}.\prod_{j=1}^q\prod_{p=0}^{k-1}|F_p(H_j)|^{\epsilon/q}}{\prod_{j=1}^{q}|F(H_j)|^{(1-\frac{k}{m_j})\omega (j)}\cdot \prod_{j=1}^q \prod_{p=0}^{k-1}\bigg((\dfrac{|F_p(H_j)|}{|F_p|})^{\epsilon/q}\log^{\omega (j)}(\delta/\phi_p(H_j))\bigg)}\bigg)^{\frac{1}{\sigma_k +\epsilon\tau_k}}\\
& \leq C_2\dfrac{2R}{R^2 - |z|^2}.
\end{align*}
Moreover, combining with
\begin{equation*}
\sup_{0<x\leq 1} x^{\frac{\epsilon}{q}}\log^{\omega(j)}\left(\frac{\delta}{x^2}\right)< +\infty,
\end{equation*}
then we get
\begin{align*}
\bigg( \dfrac{|F|^{\gamma -\epsilon\sigma_{k+1}}.|F_k|^{1+\epsilon}.\prod_{j=1}^q\prod_{p=0}^{k-1}|F_p(H_j)|^{\epsilon/q}}{\prod_{j=1}^{q}|F(H_j)|^{(1-\frac{k}{m_j})\omega (j)}}\bigg)^{\frac{1}{\sigma_k +\epsilon\tau_k}}\leq C\dfrac{2R}{R^2 - |z|^2}
\end{align*}
where $C$ is the positive constant
depending, by Theorem~\ref{PL1} and by our construction, only on $\epsilon, H_j,m_j,\omega(j) (1\leq j \leq q)$. This implies Lemma~\ref{ML}.
\end{proof}
We finally will need the following result on completeness of open Riemann surfaces with conformally flat metrics due to Fujimoto :
\begin{lemma}(\cite[Lemma 1.6.7]{Fu3}). \label{L5}
Let $d\sigma^2$ be a conformal flat metric on an open Riemann surface $M$. Then for every point $p \in M$, there is a holomorphic and locally biholomorphic map $\Phi$ of
a disk (possibly with radius $\infty$)  $\Delta_{R_0} := \{w : |w|<R_0 \}$ $(0<R_0 \leq \infty )$ onto an open neighborhood of $p$ with $\Phi (0) = p$ such that $\Phi$ is a local isometry, namely the pull-back 
$\Phi^*(d\sigma^2)$ is equal to the standard (flat) metric on $\Delta_{R_0}$, and for some point $a_0$ with $|a_0|=1$, the $\Phi$-image of the curve 
$$L_{a_0} : w:= a_0 \cdot s \; (0 \leq s < R_0)$$
is divergent in $M$ (i.e. for any compact set $K \subset M$, there exists an $s_0<R_0$
such that the $\Phi$-image of the curve $L_{a_0} : w:= a_0 \cdot s \; (s_0 \leq s < R_0)$
does not intersect $K$).
\end{lemma}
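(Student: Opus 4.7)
I would build $\Phi$ as the maximal analytic continuation of an initial flat-coordinate chart near $p$, and then, assuming the maximal radius is finite, contradict maximality unless at least one radial ray produces a divergent image in $M$. First, in a holomorphic coordinate $z$ near $p$, write $d\sigma^2=\lambda^2|dz|^2$. Flatness forces $\Delta\log\lambda=0$, so on a small disk $\log\lambda=\mathrm{Re}(h)$ for some holomorphic $h$; then $w(z)=\int_p^{z}e^{h(\zeta)}\,d\zeta$ is an isometric holomorphic coordinate with $w(p)=0$, and its inverse supplies a holomorphic local isometry $\Phi_0:\Delta_{r_0}\to M$ with $\Phi_0(0)=p$. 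Next, consider the family of all holomorphic local isometries $\Psi:\Delta_R\to M$ extending $\Phi_0$; any two such agree on $\Delta_{r_0}$ and therefore on the intersection of their domains by the identity theorem, so taking the union yields a well-defined maximal $\Phi:\Delta_{R_0}\to M$ with $R_0\in(0,\infty]$. If $R_0=\infty$, any unit $a_0$ works.

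Assume $R_0<\infty$ and suppose for contradiction that for \emph{every} $a\in S^1$ the curve $\Phi(L_a)$ stays inside some compact $K_a\subset M$. Along a sequence $s_n^{(a)}\nearrow R_0$ we may extract $\Phi(a s_n^{(a)})\to q_a\in M$. Apply the first step around $q_a$ to obtain a local isometric chart $\Phi_{q_a}:\Delta_{\delta_a}\to M$ with $\Phi_{q_a}(0)=q_a$; matching values and first derivatives along the incoming ray (and using the uniqueness statement from the previous paragraph) lets one paste $\Phi_{q_a}$ onto $\Phi$, extending $\Phi$ across an open arc of $\partial\Delta_{R_0}$ around the boundary point $aR_0$. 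Compactness of $S^1$ lets us cover it by finitely many such arcs, and the pairwise uniqueness of isometric continuation glues the local extensions into a single holomorphic local isometry on a strictly larger disk $\Delta_{R_0+\varepsilon}$, contradicting the maximality of $R_0$. Therefore some unit vector $a_0$ must yield a divergent curve $\Phi(L_{a_0})$.

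The main obstacle is the gluing step: one has to verify that the locally defined extensions near distinct boundary points $aR_0$ agree on their overlaps, so that they really combine into a single-valued holomorphic local isometry on a strictly larger disk rather than producing branching or multivaluedness. This compatibility rests entirely on the uniqueness of isometric analytic continuation from matching first-order data at a single point, which is exactly what flatness of the metric provides (via the holomorphic ODE $\Psi'(w)=e^{-h\circ\Psi(w)}$ implicit in the isometry condition) and what would fail for a general non-flat Riemannian metric.
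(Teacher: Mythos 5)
The paper offers no proof of this lemma --- it is quoted verbatim from Fujimoto \cite[Lemma 1.6.7]{Fu3} --- so your argument can only be judged on its own terms. Your overall strategy (develop a flat holomorphic coordinate near $p$, take the maximal isometric extension $\Phi\colon\Delta_{R_0}\to M$, and, when $R_0<\infty$, extend past $\partial\Delta_{R_0}$ unless some radial image diverges) is the standard one and is essentially the right proof, but two steps are defective. First, the case $R_0=\infty$ is not settled by ``any unit $a_0$ works'': take $M=\mathbb{C}/\mathbb{Z}$ (equivalently $\mathbb{C}^{*}$ with the flat metric $|dz|^2/|z|^2$) with $\Phi$ the quotient map; then $\Phi(L_{1})$ lies in the compact circle $\mathbb{R}/\mathbb{Z}$ and is not divergent. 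The assertion is still true, but it needs an argument: since $\Delta_{\infty}=\mathbb{C}$ is complete, the local isometry $\Phi$ is automatically a Riemannian covering of $M$, hence $M\cong\mathbb{C}/\Gamma$ with $\Gamma$ a discrete group of translations of rank at most $1$ (rank $2$ would make $M$ compact), and every direction not parallel to a generator of $\Gamma$ yields a divergent ray.

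Second, in the case $R_0<\infty$ you only extract a \emph{subsequential} limit $q_a$ of $\Phi(a s_n^{(a)})$ --- which is indeed all that the negation of divergence gives you --- but the pasting of $\Phi_{q_a}$ onto $\Phi$ requires the entire tail of the ray to enter and remain in one fixed flat chart around $q_a$; otherwise successive visits may lie in different connected components of $\Phi^{-1}(U_{q_a})$, producing different affine identifications and no single-valued extension across $aR_0$. This is precisely where the isometry property must be invoked: the $d\sigma$-length of $\Phi(L_a)$ restricted to $[s,s']$ equals $s'-s$, so $\Phi(as)$ is Cauchy for the distance induced by $d\sigma^2$ and, having a cluster point in the compact set $K_a$, converges to $q_a$ as $s\to R_0$. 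With this inserted, your extension over an arc about $aR_0$, and the gluing of finitely many such arcs by the identity theorem (you correctly single this out as the delicate point, and your resolution via uniqueness of continuation from agreement on $\Delta_{R_0}$ is the right one), do go through.
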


\section{The proof of the Main Theorem}
\begin{proof}
\indent For the convenience of the reader, we first recall some notations on the Gauss map of minimal surfaces in $\mathbb R^m.$
Let $M$ be a complete immersed minimal surface in $\mathbb R^m.$ Take an immersion $x
=(x_0,...,x_{m-1}) : M \rightarrow \mathbb R^m.$ Then $M$ has the structure of a Riemann surface and any local isothermal coordinate $(x, y)$ of $M$ gives a local holomorphic coordinate $z=x+ \sqrt{-1}y$. The generalized Gauss map of $x$ is defined to be 
\begin{equation*}
g : M \rightarrow \mathbb P^{m-1}(\mathbb C), g = \mathbb P(\dfrac{\partial x}{\partial z})=(\dfrac{\partial x_0}{\partial z}:\cdots:\dfrac{\partial x_{m-1}}{\partial z}).
\end{equation*}
Since $x:M \rightarrow \mathbb R^m$ is immersed, $$G=G_{z}:= 
(g_0,...,g_{m-1}) = ((g_0)_{z},...,(g_{m-1})_{z}) =(\dfrac{\partial x_0}{\partial z},\cdots,\dfrac{\partial x_{m-1}}{\partial z})$$ is a (local) reduced representation of $g$, and
since for another local holomorphic coordinate $\xi$ on $M$ we have $G_{\xi} = G_{z}\cdot (\dfrac{dz}{d\xi})$, $g$ is well defined (independently of the (local) holomorphic coordinate). 
Moreover, if $ds^2$ is the metric on $M$ induced by the standard metric on $\mathbb R^m$, we have 
\begin{equation}\label{eq:1}
ds^2 = 2|G_{z}|^2|dz|^2 .
\end{equation}
Finally since $M$ is minimal,  $g$ is a holomorphic map. 

Since by hypothesis of the Main Theorem, $g$ is $k$-non-degenerate $(1 \leq k \leq m-1)$
 without loss of
generality, we may assume that $g(M) \subset \mathbb P^k(\mathbb C);$ then
\begin{equation*}
g : M \rightarrow \mathbb P^{k}(\mathbb C), g = \mathbb P(\dfrac{\partial x}{\partial z})=(\dfrac{\partial x_0}{\partial z}:\cdots:\dfrac{\partial x_{k}}{\partial z}).
\end{equation*}
is linearly non-degenerate in $\mathbb P^k(\mathbb C)$ (so in particular $g$ is not constant)  and the other facts mentioned above still hold.

Let $H_j(j = 1,...,q)$ be $q (\geq N+1)$ hyperplanes in $\mathbb P^{m-1}(\mathbb C)$ in $N$-subgeneral position $(N\geq m-1 \geq k)$. Then  $H_j\cap \mathbb P^{k}(\mathbb C)(j = 1,...,q)$ are $q$ hyperplanes in $\mathbb P^{k}(\mathbb C)$ in $N$-subgeneral position. Let each $H_j\cap \mathbb P^{k}(\mathbb C)$ be represented  as
\begin{equation*}
H_j\cap \mathbb P^{k}(\mathbb C) :\overline{c}_{j0}\omega_0+\cdots+\overline{c}_{j{k}}\omega_{k}=0
\end{equation*}
with $\sum_{i=0}^k|c_{ji}|^2 = 1.$\\
Set
\begin{equation*}
G(H_j) = G_{z}(H_j):= \overline{c}_{j0}g_0+\cdots+\overline{c}_{jk}g_{k}.
\end{equation*}

We will now, for each contact function $\phi_p(H_j)$ for each of our hyperplanes $H_j$, choose one of the components of the numerator $|((G_z)_p)_z(H_j)|$ which is not identically zero: More precisely, for each $j,p\ (1\leq j \leq q, 1\leq p \leq k),$ we can choose $i_1,\cdots,i_p$ with $0\leq i_1<\cdots< i_p \leq k$ such that
\begin{equation*}
\psi(G)_{jp}=(\psi (G_z)_{jp})_{z}:=\sum_{l\neq i_1,..,i_p}\overline{c}_{jl}W_{z}(g_l,g_{i_1},\cdots,g_{i_p})\not\equiv 0,
\end{equation*}
(indeed, otherwise, we have 
 $\sum_{l\neq i_1,..,i_p}\overline{c}_{jl}W(g_l,g_{i_1},\cdots,g_{i_p})\equiv 0$ for all $i_1, ..., i_p$,
so $W(\sum_{l\neq i_1,..,i_p}\overline{c}_{jl}g_l,g_{i_1},\cdots,g_{i_p})\equiv 0$ for all $i_1, ..., i_p$, which contradicts the non-degeneracy of $g$ in $\mathbb P^k(\mathbb C).$
Alternatively we simply can observe that in our situation none of the contact functions vanishes identically.)
We still set $\psi (G)_{j0}=\psi (G_z)_{j0}:=G(H_j)(\not\equiv 0)$, and we also note that $\psi (G)_{jk}=((G_z)_k)_z$.
Since the $\psi (G)_{jp}$ are holomorphic, so they have only isolated zeros. 

Finally we put for later use the transformation formulas for all the terms defined above,
which are obtained by using Proposition \ref{W} :
 For local holomorphic coordinates $z$ and $\xi$ on $M$ we have : 
 \begin{equation}\label{e1}
 G_{\xi} = G_{z}\cdot (\dfrac{dz}{d\xi})\,,
 \end{equation}
  \begin{equation}\label{e2}
 G_{\xi}(H) = G_{z}(H) \cdot (\dfrac{dz}{d\xi})\,,
 \end{equation}
   \begin{equation}\label{e3}
  ((G_{\xi})_k)_{\xi}=((G_z)_k)_{z}\cdot (\dfrac{dz}{d\xi})^{k+1+\frac{k(k+1)}{2}}=((G_z)_k)_{z}(\dfrac{dz}{d\xi})^{\sigma_{k+1}}\,,
  \end{equation}
   \begin{equation}\label{e4}
  (\psi (G_{\xi})_{jp})_{\xi}=(\psi (G_z)_{jp})_{z}\cdot (\dfrac{dz}{d\xi})^{p+1+\frac{p(p+1)}{2}}=(\psi (G_z)_{jp})_{z} \cdot (\dfrac{dz}{d\xi})^{\sigma_{p+1}}\,, \:(0 \leq p \leq k)\,.
  \end{equation} 
Moreover, we also will need the following transformation formulas for mixed variables :
   \begin{equation}\label{e5}
  ((G_{\xi})_k)_{\xi}=((G_{\xi})_k)_{z}\cdot (\dfrac{dz}{d\xi})^{\frac{k(k+1)}{2}}=((G_{\xi})_k)_{z}(\dfrac{dz}{d\xi})^{\sigma_{k}}\,,
  \end{equation}
   \begin{equation}\label{e6}
  (\psi (G_{\xi})_{jp})_{\xi}=(\psi (G_{\xi})_{jp})_{z}\cdot (\dfrac{dz}{d\xi})^{\frac{p(p+1)}{2}}=(\psi (G_{\xi})_{jp})_{z} \cdot (\dfrac{dz}{d\xi})^{\sigma_{p}}\,, \:(0 \leq p \leq k)\,.
  \end{equation}

Now we prove the Main Theorem in four steps:\\

{\bf Step 1:}  
We will fix notations on the annular end $A \subset M$. Moreover, by passing  to  a sub-annular end of $A \subset M$ we simplify the geometry of the Main Theorem.

Let $A \subset M$ be an annular end of $M,$ that is, $A = \{z  :  0 < 1/r < |z| < r < \infty  \},$ where $z$ is a (global) conformal coordinate of $A$. Since $M$ is complete with respect to $ds^2$, we may assume that the restriction of
 $ds^2$ to $A$ is complete on the set  $\{ z : |z| = r\}$, i.e., the set $\{ z : |z| = r\}$ is at infinite distance from any  point  of $A$. 
 
 Let $m_j$ be the  limit inferior
of the orders of the zeros of the functions $G(H_j)$ on $A$, or
 $m_j = \infty$ if $G(H_j)$ has only a finite number of zeros on $A$. 
 
 All the $m_j$ are increasing if we only consider the zeros which the functions $G(H_j)$ take on a subset $B \subset A$. So without loss of generality we may prove our theorem only
 on a sub-annular end, i.e., a subset $ A_t :=\{z  :  0 < t \leq |z| < r < \infty  \} \subset A$ 
 with some $t$ such that $1/r < t<r$. (We trivially observe that for $c:=tr>1$, $s:= r/\sqrt{c}$, $\xi := z/ \sqrt{c}$, we have $A_t = \{\xi  :  0 < 1/s \leq |\xi| < s < \infty  \}$.) 
 
 By passing to such a sub-annular end we will be able to extend the construction of a metric in step 2 below to the set $\{ z : |z|=1/r \}$, and, moreover, we may assume that for all $j=1,...,q$ :
 \begin{equation} \label{ass1}
 g\: {\rm omits}\: H_j\: (m_j=\infty)\: {\rm or} \:{\rm takes}\: H_j \:{\rm infinitely}\: {\rm often}
 \:{\rm with}\:{\rm ramification } \end{equation}
\begin{equation*}
 m_j< \infty \:{\rm and}\: {\rm is}\: 
 {\rm ramified}\: {\rm over}\: H_j\: 
 {\rm with}\: {\rm multiplicity}\: {\rm at}\: {\rm least}\: m_j.
\end{equation*}
We next observe that we may also assume 
 \begin{equation} \label{ass1'}
m_j > k\,,\:j=1,...,q\,.
\end{equation}
In fact, if this does not hold for all $j=1,...,q$, we just drop the $H_j$ for which it does not hold, and remain with $\tilde{q}<q$ such hyperplanes. If $\tilde{q} \geq N+1$, they are still
in $N$-subgeneral position in $\mathbb P^{m-1}(\mathbb C)$ and we prove our Main Theorem for $\tilde{q}$ instead of $q$, if $ \tilde{q} < N+1$, the assertion (\ref{1}) of our Main Theorem trivially holds. In both cases since by passing from $\tilde{q}$ to $q$ again 
the right hand side of (\ref{1}) does not change, however the left hand side only
becomes possibly smaller, the inequality (\ref{1}) still holds if we (re-)consider all the $q$ hyperplanes and we are done.\\

 {\bf Step 2:} On the annular end $A = \{z  :  0 < 1/r \leq |z| < r < \infty  \}$ minus a discrete
 subset $S \subset A$ we construct a flat metric $d\tau^2$ on $A \setminus S$  which
 is complete on the set  $\{ z : |z| = r\} \cup S$, i.e., the set $\{ z : |z| = r\} \cup S $ is at infinite distance from any  point  of $A\setminus S$. We may assume that
\begin{equation}\label{eq:2}
\sum_{j=1}^q(1 - \frac{k}{m_j}) > (k+1)(N-\dfrac{k}{2})+(N+1)\,, \   
\end{equation} 
otherwise our Main Theorem is already proved.
By (\ref{eq:2}), we get 
\begin{equation}\label{eq:2'}
(\sum_{j=1}^{q}(1- \dfrac{k}{m_j}))-2N+k-1 > \dfrac{(2N-k +1)k}{2}>0\,,
\end{equation}
and by (\ref{ass1'}) this implies in particular
 \begin{equation} \label{ass1''}
q>2N-k+1 \geq N+1 \geq k+1\,.
\end{equation}
 By Theorem \ref{T1}, (\ref{ass1''}) and (\ref{eq:2'}), we have \\
$ (q - 2N + k - 1)\theta = (\sum_{j=1}^q \omega (j)) - k - 1\,,$\\ $\  \theta \geq \omega(j) > 0$ 
and $  \theta \geq \dfrac{k + 1}{2N - k + 1},$\\
so 
\begin{align*}
2\bigg ((\sum_{j=1}^q \omega(j)(1-\dfrac{k}{m_j})) - k - 1\bigg )&= \dfrac{2((\sum_{j=1}^{q}\omega(j))- k - 1)\theta}{\theta} - 2\sum_{j=1}^{q}\dfrac{k\omega(j)\theta}{\theta m_j}\\
&= 2(q-2N + k - 1)\theta - 2\sum_{j=1}^{q}\dfrac{k\omega(j)\theta}{\theta m_j}\\
&\geq 2(q-2N + k - 1)\theta - 2\sum_{j=1}^{q}\dfrac{k\theta}{m_j}\\
&= 2\theta\bigg ((\sum_{j=1}^{q}(1- \dfrac{k}{m_j}))-2N+k-1\bigg )\\
&\geq 2\dfrac{(k + 1)\bigg ((\sum_{j=1}^{q}(1- \dfrac{k}{m_j}))-2N+k-1\bigg )}{2N-k+1}.
\end{align*}
Thus, we now can  conclude with (\ref{eq:2'}) that 
\begin{align}\label{eq:3}
&2\bigg ((\sum_{j=1}^q \omega (j)(1-\dfrac{k}{m_j})) - k - 1\bigg ) > k(k+1) \nonumber \\
&\Rightarrow (\sum_{j=1}^q \omega (j)(1-\dfrac{k}{m_j})) - k - 1 - \dfrac{k(k+1)}{2} >0.
\end{align}
By (\ref{eq:3}), we can choose a number $\epsilon(>0) \in \mathbb Q $ such that \\
\begin{align*}
&\dfrac{\sum_{j=1}^q \omega(j)(1-\frac{k}{m_j}) - (k + 1) - \frac{k(k + 1)}{2}}{\tau_{k+1}} >  \epsilon >\\
& > \dfrac{\sum_{j=1}^q \omega (j)(1-\frac{k}{m_j}) - (k + 1) - \frac{k(k + 1)}{2}}{\frac{1}{q} + \tau_{k+1} }.
\end{align*}
So
\begin{equation} \label{eq:5}
h:=(\sum_{j=1}^q \omega(j)(1-\frac{k}{m_j})) - (k + 1) - \epsilon\sigma_{k+1} > \frac{k(k + 1)}{2} + \epsilon\tau_{k}
\end{equation}
and
\begin{equation}\label{eq:6} 
\dfrac{\epsilon}{q}> (\sum_{j=1}^q \omega(j)(1-\frac{k}{m_j})) - (k + 1) - \frac{k(k + 1)}{2} - \epsilon\tau_{k+1}.
\end{equation}
We now consider the number
\begin{equation}\label{nb}
\rho := \dfrac{1}{h} \bigg ( \frac{k(k + 1)}{2} + \epsilon\tau_k\bigg)= \dfrac{1}{h} \bigg ( \sigma_k + \epsilon\tau_k\bigg).
\end{equation}
Then, by (\ref{eq:5}), we have
 \begin{equation}\label{eq:7}
0 < \rho < 1.
\end{equation}
Set
\begin{equation}\label{eq:8}
\rho^* := \dfrac{1}{(1 - \rho)h}=\dfrac{1}{(\sum_{j=1}^q \omega(j)(1-\frac{k}{m_j})) - (k + 1) - \frac{k(k + 1)}{2} - \epsilon\tau_{k+1}}.
\end{equation}
Using (\ref{eq:6}) we get 
\begin{equation}\label{eq:9}
\dfrac{\epsilon\rho^*}{q} > 1.
\end{equation}

Consider the open subset
\begin{equation*}
A_1 = Int(A) -\cup_{j=\overline{1,q},p=\overline{0,k}}\{ z | \psi (G)_{jp} = 0 \}
\end{equation*}
of $A$. Using the global holomorphic coordinate $z$ on $A \supset A_1$ we define a new pseudo metric
\begin{equation}\label{eq:10}
d\tau^2 
= \bigg(\dfrac{\Pi_{j=1}^q|G_{z}(H_j)|^{\omega(j)(1-\frac{k}{m_j})}}{|((G_z)_k)_{z}|^{1+\epsilon}\Pi_{p=0}^{k-1}\Pi_{j=1}^q|(\psi(G_z)_{jp})_{z}|^{\epsilon/q}}\bigg)^{2\rho^*}|dz|^2
\end{equation}
on $A_1.$
We note that by the transformation formulas (\ref{e1}) to (\ref{e4}) 
for a local holomorphic coordinate $\xi$ we have 
\begin{equation}\label{ind1}
\bigg(\dfrac{\Pi_{j=1}^q|G_{z}(H_j)|^{\omega(j)(1-\frac{k}{m_j})}}{|((G_z)_k)_{z}|^{1+\epsilon}\Pi_{p=0}^{k-1}\Pi_{j=1}^q|(\psi (G_z)_{jp})_{z}|^{\epsilon/q}}\bigg)^{2\rho^*}|dz|^2 
\end{equation}
\begin{equation*}
= 
\bigg(\dfrac{\Pi_{j=1}^q|G_{\xi}(H_j)|^{\omega(j)(1-\frac{k}{m_j})}}{|((G_{\xi})_k)_{\xi}|^{1+\epsilon}\Pi_{p=0}^{k-1}\Pi_{j=1}^q|(\psi(G_{\xi})_{jp})_{\xi}|^{\epsilon/q}}\bigg)^{2\rho^*}|d\xi|^2
\end{equation*}
so the pseudo metric $d\tau$ is in fact defined independently of the choice of the coordinate.  
Moreover, it is also easy to see that $d\tau$ is flat. 

Next we observe that for any point $z \in A$, we have
\begin{equation}\label{lemma}
(\nu_{G_k} - \sum_{j=1}^q \omega (j) \nu_{G(H_j)}(1-\frac{k}{m_j}))(z) \geq 0 \,.
\end{equation}
In fact, put $\phi:= \frac{|G_k|}{\prod_{j=1}^q |G(H_j)|^{\omega(j)}}$. 
Observing that by (\ref{ass1'}) for all $j=1,...,q$ and all $z \in A$ we have either 
$\nu_{G(H_j)}(z)=0$ or $\nu_{G(H_j)}(z) \geq m_j \geq k$, we get
$$\frac{k}{m_j}\nu_{G(H_j)} \geq \min \{\nu_{G(H_j)}, k\} \,.
$$ 
So by Lemma \ref{P7} we have
\begin{align*}
&\nu_{G_k} - \sum_{j=1}^q \omega (j) \nu_{G(H_j)}(1-\frac{k}{m_j})\\
&= \nu_{\phi} + \sum_{j=1}^q \omega (j)\frac{k}{m_j} \nu_{G(H_j)}\\
&\geq \nu_{\phi} + \sum_{j=1}^q \omega (j) \min \{\nu_{G(H_j)}, k\} \geq 0\,.\\
\end{align*}

Now it is easy to see that $d\tau$ is continuous and nowhere vanishing on $A_1.$
Indeed, for $z_0 \in A_1$ with $\Pi_{j=1}^qG(H_j)(z_0) \not= 0,$ $d\tau$ is continuous and not vanishing at $z_0.$
Now assume that there exists $z_0 \in A_1$ such that $G(H_i)(z_0)=0$ for some $i.$ But by (\ref{lemma}) and (\ref{ass1'}) we then get that
$\nu_{G_k}(z_0)>0$ which contradicts to $z_0 \in A_1$.\\

The key point is now to prove following claim. 
\begin{claim} \label{Cl1}
 $d\tau$ is complete on the set  $\{ z : |z| = r\}\cup_{j=\overline{1,q},p=\overline{0,k}} \{z :\psi (G)_{jp}(z)=0 \},$ i.e., set $\{ z : |z| = r\}\cup_{j=\overline{1,q},p=\overline{0,k}} \{z : \psi (G)_{jp}(z)=0 \}$ is at infinite distance from any interior point in $A_1.$ 
\end{claim}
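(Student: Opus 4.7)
The plan is to argue by contradiction along the Fujimoto-Kao-Ru lines: if Claim~\ref{Cl1} fails, I would use Lemma~\ref{L5} to produce a local isometry from a finite-radius disk into $A_1$, apply Lemma~\ref{ML} to the composition $g\circ\Phi$ to bound its reduced representation, and then use the $ds^2$-completeness of $M$ at $\{|z|=r\}$ together with the non-integrability of $d\tau$ near zeros of the $\psi(G)_{jp}$ to produce the contradiction.

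Suppose Claim~\ref{Cl1} fails. The sub-annular reduction in step 1 was arranged precisely so that $d\tau$ extends continuously and positively across the inner boundary $\{|z|=1/r\}$, so the only remaining source of incompleteness of $(A_1,d\tau^2)$ is the set $S:=\{|z|=r\}\cup\bigcup_{j,p}\{\psi(G)_{jp}=0\}$. Lemma~\ref{L5} applied to $(A_1,d\tau^2)$ then yields a holomorphic, locally biholomorphic map $\Phi:\Delta_{R_0}\to A_1$ with $R_0<\infty$, $\Phi(0)=p_0\in A_1$, $\Phi^*(d\tau^2)=|dw|^2$, and a unit $a_0$ such that $\Phi(L_{a_0})$ is a divergent curve in $A_1$, necessarily tending to a point of $S$. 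I would set $f:=g\circ\Phi:\Delta_{R_0}\to\mathbb{P}^k(\mathbb{C})$ with the reduced representation $F:=G_w$ induced by the coordinate $w$. Then $f$ is linearly non-degenerate (since $g|_A$ is and $\Phi$ is locally biholomorphic), ramified over each $H_j$ with multiplicity $\geq m_j>k$ by (\ref{ass1'}), and the $\epsilon$ fixed in (\ref{eq:5})--(\ref{eq:9}) makes $\gamma-\epsilon\sigma_{k+1}=h>\sigma_k+\epsilon\tau_k>0$, so all hypotheses of Lemma~\ref{ML} are in force.

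Lemma~\ref{ML} gives
\begin{equation*}
|F|^{h}\cdot\frac{|F_k|^{1+\epsilon}\prod_{j,p}|F_p(H_j)|^{\epsilon/q}}{\prod_{j}|F(H_j)|^{\omega(j)(1-k/m_j)}}\leq C\left(\frac{2R_0}{R_0^2-|w|^2}\right)^{\sigma_k+\epsilon\tau_k},
\end{equation*}
while the isometry $\Phi^*(d\tau^2)=|dw|^2$, read in the coordinate $w$ via (\ref{eq:10}) and (\ref{ind1}), amounts to the identity
\begin{equation*}
|F_k|^{1+\epsilon}\prod_{j,p}|\psi(F)_{jp}|^{\epsilon/q}=\prod_{j}|F(H_j)|^{\omega(j)(1-k/m_j)},
\end{equation*}
where $\psi(F)_{jp}$ is the distinguished component attached to the representation $F$ in analogy with $\psi(G)_{jp}$. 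Since $\psi(F)_{jp}$ is one of the terms in the sum defining $|F_p(H_j)|$, we have $|F_p(H_j)|\geq|\psi(F)_{jp}|$, so the quotient in the Lemma~\ref{ML} bound is $\geq 1$; consequently $|F|^h\leq C\bigl(2R_0/(R_0^2-|w|^2)\bigr)^{\sigma_k+\epsilon\tau_k}$, and hence by (\ref{nb})--(\ref{eq:7}),
\begin{equation*}
|F(w)|\leq C'\left(\frac{2R_0}{R_0^2-|w|^2}\right)^{\rho},\qquad 0<\rho<1.
\end{equation*}
Using $\Phi^*(ds^2)=2|F|^2|dw|^2$ from (\ref{eq:1}), the $ds$-length of $\Phi(L_{a_0})$ is bounded by $\sqrt{2}\,C'\int_0^{R_0}\bigl(2R_0/(R_0^2-s^2)\bigr)^{\rho}ds$, which is finite because $\rho<1$. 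If $\Phi(L_{a_0})$ tends to $\{|z|=r\}$, this contradicts the $ds^2$-completeness on that set arranged in step 1. If instead it tends to a zero $q\in A$ of some $\psi(G)_{j_0 p_0}$ of order $n\geq 1$, formula (\ref{eq:10}) immediately gives $\eta_z\gtrsim|z-q|^{-n\epsilon\rho^*/q}$, and (\ref{eq:9}) forces $n\epsilon\rho^*/q>1$, so $d\tau$ is non-integrable along any path into $q$ and $\Phi(L_{a_0})$ would have infinite $d\tau$-length, contradicting $R_0<\infty$. I expect the main obstacle to be the coordinate-change and combinatorial bookkeeping that converts $\Phi^*(d\tau^2)=|dw|^2$ into the clean identity above and reconciles the distinguished components $\psi(F)_{jp}$ used to define $d\tau$ with the full contact norms $|F_p(H_j)|$ that appear in Lemma~\ref{ML}; the elementary inequality $|F_p(H_j)|\geq|\psi(F)_{jp}|$ is the essential bridge.
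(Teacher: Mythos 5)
Your analytic core is sound and is in substance the paper's computation: applying Lemma \ref{ML} to $f=g\circ\Phi$ with the $w$-adapted representation $F=G_w$, using the isometry identity $\Pi_j|F(H_j)|^{\omega(j)(1-k/m_j)}=|(F_k)_w|^{1+\epsilon}\Pi_{j,p}|(\psi(F)_{jp})_w|^{\epsilon/q}$ together with $|F_p(H_j)|\geq|\psi(F)_{jp}|$ to get $|F|\leq C'\bigl(2R_0/(R_0^2-|w|^2)\bigr)^{\rho}$ and hence an integrable bound on $\Phi^*ds$ since $\rho<1$ (the paper instead solves the isometry relation for $|dz/dw|$ and substitutes, which is equivalent). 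Your pole-order estimate at the zeros of the $\psi(G)_{jp}$ is likewise the paper's first case, granted that you invoke (\ref{lemma}) to rule out compensation by the vanishing of the numerator $\Pi_j|G(H_j)|^{\omega(j)(1-k/m_j)}$. The genuine gap is in how you produce $\Phi$, specifically the assertion that the divergent curve $\Phi(L_{a_0})$ ``necessarily tends to a point of $S$.'' The open surface $A_1$ also has the inner boundary circle $\{|z|=1/r\}$, and precisely \emph{because} $d\tau$ extends continuously and positively across it --- your own observation --- that circle lies at \emph{finite} $d\tau$-distance from interior points. Hence divergent curves of finite $d\tau$-length tending to $\{|z|=1/r\}$ exist whether or not the Claim holds, Lemma \ref{L5} applied blindly to $(A_1,d\tau^2)$ may hand you exactly such a curve, and from it no contradiction can be drawn. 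Your argument in fact never uses the hypothesized finite-length path to $S$. (Note also that Lemma \ref{L5} does not by itself give $R_0<\infty$; that requires a separate Liouville-type argument or an a priori bound on $R_0$.)

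The paper closes this gap as follows: it first proves completeness at the zeros of the $\psi(G)_{jp}$ by the direct local estimate (no isometry needed); then, assuming a path $\gamma$ of finite $d\tau$-length ending on $\{|z|=r\}$, it chooses the base point so that $\mathrm{dist}(\gamma(0),\{|z|=1/r\})>2|\gamma|$ and extends a local isometry of a small flat disk around $\gamma(0)$ to the \emph{largest possible} disk $\Delta_R$, which forces $R\leq|\gamma|$; the image of $\Delta_R$ therefore cannot reach the inner circle, so the divergent radius-image must go to $\{|z|=r\}$ (the zero set being already excluded), and only then does the length estimate contradict the completeness of $ds^2$. You need some such device anchoring the isometry to the assumed short path and keeping its image away from the inner boundary before your otherwise correct estimate can deliver the contradiction.
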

First, assume that $\Pi_{p=0}^{k}\Pi_{j=1}^q|\psi (G)_{jp}|(z_0) = 0.$\\
Then using (\ref{lemma}) we get 
\begin{align*}
\nu_{d\tau}(z_0)&= - \bigg(( \nu_{G_k}(z_0) - \sum_{j=1}^q\omega(j)\nu_{G(H_j)}(z_0)(1-\dfrac{k}{m_j})) + (\epsilon \nu_{G_k}(z_0) + \dfrac{\epsilon}{q}\sum_{j=1}^q \sum_{p=0}^{k-1}\nu_{\psi (G)_{jp}}(z_0))\bigg)\rho^*\\ 
&\leq -\epsilon \rho^*\nu_{G_k}(z_0) -\dfrac{\epsilon\rho^*}{q}\sum_{j=1}^q \sum_{p=0}^{k-1}\nu_{\psi (G)_{jp}}(z_0)\leq -\dfrac{\epsilon\rho^*}{q}.
\end{align*}
Thus we can find a positive constant $C$ such that 
\begin{equation*}
|d\tau| \geq \dfrac{C}{ |z-z_0|^{\frac{\epsilon\rho^*}{q}}}|dz|
\end{equation*}
in a neighborhood of $z_0$ and then, combining with (\ref{eq:9}), $d\tau$ is complete on $\cup_{j=\overline{1,q},p=\overline{0,k}} \{z | \psi (G)_{jp}(z)=0 \}.$ \\
\indent Now assume that $d\tau$ is not complete on $\{z: |z| = r \}.$ Then there exists $\gamma: [0, 1) \rightarrow A_1,$ where $\gamma (1) \in \{z : |z| = r \},$ so that $|\gamma| < \infty.$ Furthermore, we may also assume that $dist(\gamma (0); \{z : |z| = 1/r\}) > 2|\gamma|.$ Consider a small disk $\Delta$ with center at $\gamma (0).$ Since $d\tau$ is flat, $\Delta$ is isometric to an ordinary disk in the plane (cf. e.g.  Lemma \ref{L5}). Let $\Phi: \{w: |w| < \eta \}\rightarrow \Delta$ be this isometry. Extend $\Phi,$ as a local isometry into $A_1,$ to the largest disk $\{w: |w| < R\} = \Delta_R$ possible. Then $R \leq |\gamma|.$ The reason that $\Phi$ cannot be extended to a larger disk is that the image goes to the outside boundary $\{z : |z| = r\}$ of $A_1$ (it cannot go to points $z$ of $A$ with $\Pi_{j=\overline{1,q},p=\overline{0,k}} \psi (G)_{jp}(z) =0$ since we have shown already the completeness of $A_1$ with respect 
to these points). More precisely, there exists a point $w_0$ with $|w_0| =R$ so that $\Phi(\overline{0,w_0}) = \Gamma_0$ is a divergent curve on $A.$

Since we want to use Lemma \ref{ML} to finish up step 2, for the rest of the proof of step 2
we consider $G_z=((g_0)_z,...,(g_k)_z)$ as a {\it fixed globally defined reduced representation of $g$} by means of the global coordinate $z$ of $A \supset A_1$.
(We remark that then we loose of course the invariance of $d\tau^2$ under coordinate changes (\ref{ind1}), but since $z$ is a global coordinate this will be no problem 
and we will not need this invariance
for the application of Lemma \ref{ML}.) If again 
$\Phi : \{w: |w| < R\} \rightarrow A_1$ is our maximal local isometry, it is in particular holomorphic and locally biholomorphic. So $f:= g \circ \Phi :  \{w: |w| < R\} \rightarrow
\mathbb P^k (\mathbb C)$ is a linearly non-degenerate holomorphic map with fixed global
reduced representation $$F:= G_z \circ \Phi =((g_0)_z  \circ \Phi,...,(g_k)_z  \circ \Phi)
=(f_0,...,f_k)\,.$$ 
Since $\Phi$ is locally biholomorphic, the metric on $\Delta_R$ induced from $ds^2$ (cf. (\ref{eq:1})) through $\Phi$ is given by
\begin{equation}\label{eq:12}
\Phi^*ds^2 =  2|G_z \circ \Phi|^2|\Phi^* dz|^2 = 
2|F|^2 |\frac{dz}{dw}|^2|dw|^2\,.
\end{equation}
On the other hand, $\Phi$ is locally isometric, so we have
\begin{equation*}
|dw| = |\Phi^*d\tau|= \bigg(\dfrac{\Pi_{j=1}^q|G_z(H_j) \circ \Phi|^{\omega(j)(1-\frac{k}{m_j})}}{|((G_z)_k)_z \circ \Phi|^{1+\epsilon}\Pi_{p=0}^{k-1}\Pi_{j=1}^q|(\psi (G_z)_{jp})_z \circ \Phi|^{\epsilon/q}}\bigg)^{\rho^*}|\frac{dz}{dw}||dw|\,.
\end{equation*}
By (\ref{e5}) and (\ref{e6}) we have
$$
 ((G_z)_k)_z \circ \Phi =((G_z \circ \Phi)_k)_{w}(\dfrac{dw}{dz})^{\sigma_{k}}
 =(F_k)_{w}(\dfrac{dw}{dz})^{\sigma_{k}}\,,$$
 $$
  (\psi (G_{z})_{jp})_{z} \circ \Phi =(\psi (G_{z} \circ \Phi)_{jp})_{w} \cdot (\dfrac{dw}{dz})^{\sigma_{p}}=(\psi (F)_{jp})_{w} \cdot (\dfrac{dw}{dz})^{\sigma_{p}}\,, \:(0 \leq p \leq k)\,\,.
$$
Hence, by definition of $\rho$ in (\ref{nb}), we have
\begin{align*}
|\dfrac{dw}{dz}|&= 
\bigg(\dfrac{\Pi_{j=1}^q|G_z(H_j) \circ \Phi|^{\omega(j)(1-\frac{k}{m_j})}}{|((G_z)_k)_z \circ \Phi|^{1+\epsilon}\Pi_{p=0}^{k-1}\Pi_{j=1}^q|(\psi (G_z)_{jp})_z \circ \Phi|^{\epsilon/q}}\bigg)^{\rho^*}
\\
&= \bigg(\dfrac{\Pi_{j=1}^q|F(H_j)|^{\omega(j)(1-\frac{k}{m_j})}}{|(F_k)_w|^{1+\epsilon}\Pi_{p=0}^{k-1}\Pi_{j=1}^q|(\psi (F)_{jp})_w|^{\epsilon/q}}\bigg)^{\rho^*}\dfrac{1}{|\frac{dw}{dz}|^{h\rho\rho^*}}.
\end{align*}
So by the definition of $\rho^*$ in (\ref{eq:8}), we get
\begin{align*}
|\dfrac{dz}{dw}|&= 
\bigg(
\dfrac{|(F_k)_w|^{1+\epsilon}\Pi_{p=0}^{k-1}\Pi_{j=1}^q|(\psi (F)_{jp})_w|^{\epsilon/q}}
{\Pi_{j=1}^q|F(H_j)|^{\omega(j)(1-\frac{k}{m_j})}}
\bigg)^{\frac{\rho^*}{1+h\rho\rho^*}}\\
&= 
\bigg(
\dfrac{|(F_k)_w|^{1+\epsilon}\Pi_{p=0}^{k-1}\Pi_{j=1}^q|(\psi (F)_{jp})_w|^{\epsilon/q}}
{\Pi_{j=1}^q|F(H_j)|^{\omega(j)(1-\frac{k}{m_j})}}
\bigg)^{\frac{1}{h}} \ .
\end{align*}
Moreover, $|(\psi (F)_{jp})_w| \leq |(F_p)_w(H_j)|$ by the definitions, so we obtain
\begin{equation}\label{eq:13}
|\dfrac{dz}{dw}| \leq  
\bigg(\dfrac{|(F_k)_w|^{1+\epsilon}\Pi_{p=0}^{k-1}\Pi_{j=1}^q|(F_p)_w(H_j)|^{\epsilon/q}}
{\Pi_{j=1}^q|F(H_j)|^{\omega(j)(1-\frac{k}{m_j})}}\bigg)^{\frac{1}{h}} \ .
\end{equation}
By (\ref{eq:12}) and (\ref{eq:13}), we have
\begin{equation*}
\Phi^*ds \leqslant \sqrt{2}|F|
\bigg(\dfrac{|(F_k)_w|^{1+\epsilon}\Pi_{p=0}^{k-1}\Pi_{j=1}^q|(F_p)_w(H_j)|^{\epsilon/q}}
{\Pi_{j=1}^q|F(H_j)|^{\omega(j)(1-\frac{k}{m_j})}}\bigg)^{\frac{1}{h}}
|dw|.
\end{equation*}
By (\ref{ass1''}) and (\ref{eq:5}) all the conditions of Lemma \ref{ML} are satisfied. So we obtain by Lemma \ref{ML} :
\begin{equation*}
\Phi^*ds \leqslant C(\dfrac{2R}{R^2 -|w|^2})^{\rho}|dw|\,.
\end{equation*}
Since by (\ref{eq:7}) we have $0 < \rho < 1,$ it then follows that 
\begin{equation*}
d_{\Gamma_0} \leqslant \int_{\Gamma_0}ds = \int_{\overline{0,w_0}}\Phi^*ds \leqslant C \cdot \int_0^R(\dfrac{2R}{R^2 -|w|^2})^{\rho}|dw|  < + \infty,
\end{equation*}
where $d_{\Gamma_0}$ denotes the length of the divergent curve $\Gamma_0$ in $M,$ contradicting the assumption of completeness of $M.$ Claim \ref{Cl1} is proved.\\

{\bf Step 3:} We will "symmetrize" the metric $d \tau^2$ constructed in step 2 so that it will become
a complete and flat metric on $Int(A) \setminus (S \cup \tilde{S})$ (with $\tilde{S}$ another discrete subset).

We introduce a new coordinate $\xi (z) :=1/z$ 
on $A = \{z : 1/r \leq |z| < r \}$.
By (\ref{e4}) we have $S= \{ z : \Pi_{p=0}^{k}\Pi_{j=1}^q(\psi (G_z)_{jp})_z (z)= 0\} = 
\{ z : \Pi_{p=0}^{k}\Pi_{j=1}^q({\psi (G_{\xi})_{jp}})_{\xi}(z) = 0 \} $ (where the zeros are taken with the same multiplicities) and since by (\ref{ind1})
$d\tau^2$ is independent of the coordinate $z$, the change of coordinate $\xi (z) = 1/z$ yields an isometry of $A \setminus S$
onto the set $\tilde{A} \setminus \tilde{S}$, where $\tilde{A}:=\{z : 1/r < |z| \leq r \}$ 
and $\tilde{S}:= \{ z : \Pi_{p=0}^{k}\Pi_{j=1}^q({\psi (G_z)_{jp}})_z(1/z) = 0 \}$.
In particular we have 
\begin{align*}
d\tau^2&= 
\bigg(\dfrac{\Pi_{j=1}^q|G_{\xi}(H_j)(1/z)|^{\omega(j)(1-\frac{k}{m_j})}}{|((G_{\xi})_k)_{\xi}(1/z)|^{1+\epsilon}\Pi_{p=0}^{k-1}\Pi_{j=1}^q|(\psi (G_{\xi})_{jp})_{\xi}(1/z)|^{\epsilon/q}}\bigg)^{2\rho^*}
|d(1/z)|^2\\
&=
\bigg(\dfrac{\Pi_{j=1}^q|G_{z}(H_j)(1/z)|^{\omega(j)(1-\frac{k}{m_j})}}{|((G_{z})_k)_{z}(1/z)|^{1+\epsilon}\Pi_{p=0}^{k-1}\Pi_{j=1}^q|(\psi (G_{z})_{jp})_{z}(1/z)|^{\epsilon/q}}\bigg)^{2\rho^*}
|dz|^2 \ .
\end{align*}
\indent We now define 
\begin{align*}
d\tilde{\tau}^2&= \bigg(\dfrac{\Pi_{j=1}^q|G_z(H_j)(z)G_z(H_j)(1/z)|^{\omega(j)(1-\frac{k}{m_j})}}{|((G_z)_k)_z(z)((G_z)_k)_z(1/z)|^{1+\epsilon}\Pi_{p=0}^{k-1}\Pi_{j=1}^q|(\psi (G_z)_{jp})_z(z)(\psi (G_z)_{jp})_z(1/z)|^{\epsilon/q}}\bigg)^{2\rho^*}|dz|^2\\
&=\lambda^2(z)|dz|^2, 
\end{align*}
on $\tilde{A}_1 := \{z : 1/r < |z| < r \} \setminus \{ z : \Pi_{p=0}^{k}\Pi_{j=1}^q
({\psi (G_z)_{jp}})_z(z)({\psi (G_z)_{jp}})_z(1/z)  = 0 \} .$ Then $d\tilde{\tau}^2$ is complete  on $\tilde{A}_1$ : In fact by what we showed
above we have:  Towards any point of the boundary 
$\partial \tilde{A}_1 := \{z : 1/r = |z| \} \cup \{z :  |z| = r \} \cup \{ z : \Pi_{p=0}^{k}\Pi_{j=1}^q({\psi (G_z)_{jp}})_z(z)({\psi (G_z)_{jp}})_z(1/z)  = 0 \} $ of $\tilde{A}_1$, one of the factors of $\lambda^2(z)$ is bounded
from below away from zero, and 
the
other factor is the one of a complete metric with respect of this part of the boundary.
Moreover by  the corresponding properties of the two factors of $\lambda^2(z)$ it is trivial that $d\tilde{\tau}^2$ is a continuous nowhere vanishing and flat metric  on $\tilde{A}_1$.\\

{\bf Step 4 :} We produce a contradiction by using Lemma \ref{L5} to the open Riemann surface $(\tilde{A}_1, d\tilde{\tau}^2)$ : \\
In fact, we apply Lemma \ref{L5} to any point $p \in \tilde{A}_1$. Since $d\tilde{\tau}^2$ is 
complete, there cannot exist a divergent curve from $p$ to the boundary $\partial \tilde{A}_1$
with finite length with respect to $d\tilde{\tau}^2$. Since $\Phi : \Delta_{R_0} \rightarrow \tilde{A}_1$ is a local
isometry, we necessarily have $R_0 = \infty$. So $\Phi : {\mathbb C} \rightarrow \tilde{A}_1 \subset \{z : |z| <r\}$ is a non-constant holomorphic map, which contradicts to
Liouville's theorem. So our assumption (\ref{eq:2}) was wrong.
This proves the Main Theorem. 
 \end{proof}
 
  \begin{proof} (of Corollary 1 and Corollary 2)  We first observe that the inequality 
 (\ref{1}) in the Main Theorem is equivalent to the inequality
 \begin{equation} \label{1'}
  \ell (k):=\frac{k^2}{2} - k \cdot ((\sum_{j=1}^q \frac{1}{m_j})+N-\frac{1}{2}) \leq 2N-q+1\,,
  \end{equation}
where $\ell $ is a function defined on $\mathbb N \cap [1, m-1]$. Observing that 
$m-1 \leq N$, it is easy to see that the function $\ell$ is monotonely decreasing,
so if (\ref{1'}) is satisfied for some $1 \leq k \leq m-1$, it is also satisfied for $k=m-1$.
This proves Corollary 1. To prove Corollary 2, we apply the inequality (\ref{2'}) of Corollary 1 to the
$q:=\sigma_m +1$ hyperplanes $H_1,...,H_q$ assuming that $g$ meets the first $q-1$
of these hyperplanes only finitely often. Then we get $(1 - \frac{m-1}{m_q}) \leq 0$, which is
equivalent to $m_q \leq m-1$. \end{proof}

{\bf Acknowledgements.} 
A part of this work was completed during a stay of the two first  named authors at 
the Vietnam Institute for Advanced Study in Mathematics (VIASM).
The research of the second named author is partially supported by a NAFOSTED grant of Vietnam.

\vspace{1cm}

\noindent {\it Gerd Dethloff $^{1,2}$, Pham Hoang Ha$^{3}$ and Pham Duc Thoan$^{4}$\\

 \noindent $^1$ Universit\'e Europ\'eenne de Bretagne, France\\
 $^2$ Universit\'e de Brest\\
 Laboratoire de Math\'{e}matiques de Bretagne Atlantique - \\
 UMR CNRS 6205\\ 
6, avenue Le Gorgeu, BP 452\\
29275 Brest Cedex, France\\
$^3$ Department of Mathematics, Hanoi National University of Education\\
136 XuanThuy str., Hanoi, Vietnam\\
$^4$ Department of Information Technology, National University of Civil Engineering\\
55 Giai Phong str., Hanoi, Vietnam}

\noindent Email : Gerd.Dethloff@univ-brest.fr ; ha.ph@hnue.edu.vn ; ducthoan.hh@gmail.com
\end{document}